\theoremstyle{plain}
\newtheorem{definition}{Definition}[section]
\newtheorem{thm}{Theorem}[section]
\newtheorem{theorem}{Theorem}[section]
\newtheorem{lemma}[thm]{Lemma}
\newtheorem{corollary}[thm]{Corollary}
\theoremstyle{definition}
\theoremstyle{remark}                  
\definecolor{darkgreen}{rgb}{0,0.4,0}
\numberwithin{equation}{section}
\def\tr{\textnormal{tr}}
\def\XXint#1#2#3{{\setbox0=\hbox{$#1{#2#3}{\int}$ }
\vcenter{\hbox{$#2#3$ }}\kern-.6\wd0}}
\title[Integro-PDEs]{Regularity theory for Second order Integro-PDEs}
\author[Chenchen Mou and Yuming Paul Zhang]{\bfseries Chenchen Mou and  Yuming Paul Zhang}
\address{
(C. Mou) Department of Mathematics \\ 
University of California   \\ 
Los Angeles\\
USA}
\email{muchenchen@math.ucla.edu}
\address{
(Y. Zhang) Department of Mathematics \\ 
University of California   \\ 
Los Angeles\\
USA}
\email{yzhangpaul@math.ucla.edu}
\begin{document}

\vspace{18mm} \setcounter{page}{1} \thispagestyle{empty}

\begin{abstract}
This paper is concerned with higher H\"older regularity for viscosity solutions to non-translation invariant second order integro-PDEs, compared to \cite{mou2018}. We first obtain $C^{1,\alpha}$ regularity estimates for fully nonlinear integro-PDEs. We then prove the Schauder estimates for solutions if the equation is convex.
\end{abstract}

\maketitle

\vspace{.1cm}
\noindent{\small {\bf Keywords:} viscosity solution; integro-PDE; 
Hamilton-Jacobi-Bellman-Isaacs equation; 
H\"older regularity; uniqueness.}

\vspace{.1cm}
\noindent{\small  {\bf 2010 Mathematics Subject Classification}: 35D40, 35J60, 
35R09, 49N70.}

\bigskip

\section{Introduction}

We consider the following Hamilton-Jacobi-Bellman-Isaacs (HJBI) integro-PDE
\begin{equation}\label{main}
    \mathcal{I}[x,u]:=\sup_{\alpha\in\mathcal{A}}\inf_{\beta\in\mathcal{B}}\left\{-\tr{ \, a_{\alpha\beta}(x) D^2 u(x)}-I_{\alpha\beta}[x,u]+b_{\alpha\beta}(x)\cdot Du(x)+c_{\alpha\beta}(x)u(x)+f_{\alpha\beta}(x)\right\}=0
\end{equation}
in a bounded domain $\Omega\subset \mathbb{R}^d
$ with the following boundary data
\begin{equation}
    \label{main boundary}
    u(x)=g(x) \quad\text{in }\Omega^c.
\end{equation}
The coefficients $a_{\alpha\beta}, b_{\alpha\beta}, c_{\alpha\beta}, f_{\alpha\beta}$ are uniformly continuous in $\Omega$, uniformly in $\alpha\in \mathcal{A}$ and $\beta\in\mathcal{B}$. The L\'{e}vy measure has the form
\begin{equation}\label{eq:nonlocal operator}
    I_{\alpha\beta}[x,u]=\int_{\mathbb{R}^d\setminus \{0\}}( u(x+z)-u(x)-\chi_{B_1}(z)Du(x)\cdot z)N_{\alpha\beta}(x,z)dz,
\end{equation}
where $N_{\alpha\beta}(\cdot,z)$ is uniformly continuous in $\Omega$ for each fixed $z\in\mathbb R^d\setminus\{0\}$, uniformly in $\alpha\in\mathcal{A}$ and $\beta\in\mathcal{B}$.
We require $\lambda I\leq a_{\alpha\beta}\leq \Lambda I$ in $\Omega$ for some $0<\lambda\leq\Lambda$ and
\begin{equation} \label{cond K}
    0\leq N_{\alpha\beta}(x,\cdot)\leq K(\cdot)\quad\text{in $\mathbb R^d\setminus\{0\}$}
\end{equation}
for some measurable function $K$ on $\mathbb R^d\setminus\{0\}$ satisfying
\begin{equation}\label{integ K}
\int_{\mathbb{R}^d\setminus\{0\}}\min\{|z|^2,1\}K(z)dz<+\infty.
\end{equation}
In this paper we extend $K$ and $N_{\alpha\beta}(x,\cdot)$ for any $\alpha\in\mathcal{A}$, $\beta\in\mathcal{B}$, $x\in\Omega$ to functions on $\mathbb R^d$ by setting $N_{\alpha\beta}(x,0)=K(0)=0$.

\bigskip

Here we want to emphasize that the assumption \eqref{cond K} we impose on $N_{\alpha\beta}$ does not guarantee that the nonlocal operator $I_{\alpha\beta}$ has an order i.e. there exists a constant $\sigma$ such that $I_{\alpha\beta}[x,u(r\cdot)]=r^{\sigma}I_{\alpha\beta}[rx,u(\cdot)]$. Moreover, we notice that, with \eqref{integ K}, the nonlocal operator $I_{\alpha\beta}$ behaves like a second order operator and is not well defined acting on any unbounded function. These features of the nonlocal term lead to essential difficulties, which we will specify them later, in the proof of the regularity results.

\medskip

In this paper we study $C^{1,\alpha}$ and $C^{2,\alpha}$ regularity results for second order uniformly elliptic integro-PDEs as a consequence of the uniform ellipticity of the differential operators. The motivation of studying such regularity results comes from the stochastic optimal control problems. Two mainly concerned problems in the stochastic optimal control theory are the stochastic representation formulas for HJB equations and the optimal policies. 
 Indeed, to obtain the stochastic representation for a degenerate HJB integro-PDE, we could first derive it for the approximating equation, by adding $\epsilon\Delta u$ to the HJB integro-PDE, which is a uniformly elliptic equation where the uniform ellipticity comes from the second order term. The $C^{2,\alpha}$ regularity for such uniformly elliptic HJB integro-PDE is crucial for the application of the It\^o formula for general L\'evy processes to derive the representation for its solution and the optimal controls. Then, by an approximation (``vanishing viscosity") argument, we can obtain the representation for the approximating equation converges to that for the degenerate HJB integro-PDE in $L^\infty$ sense as $\epsilon\to 0$. And thus the stochastic representation for the degenerate HJB integro-PDE and a $\epsilon$-optimal control are obtained. We refer to \cite{FS,KS10,GMS} for more on the application of regularity theory to the stochastic optimal control problems. The other motivation of studying regularity for the integro-differential operator $\mathcal{I}$ in \eqref{main} comes from the generality of the operator. Indeed it has been proved that if $\mathcal{I}$ maps $C^2$ functions to $C^0$ functions and satisfies the degenerate ellipticity assumption then $\mathcal{I}$ should have the form in \eqref{main}, see \cite{Cour,ns}. 

\medskip

Existence of a $C^{2,\alpha}$ solution of the Dirichlet boundary value problem for a uniformly parabolic convex second order integro-PDE has been obtained by R. Mikulyavichyus and G. Pragarauskas in \cite{MP4} under an additional assumption on the nonlocal term. The convexity assumption implies the equation studied there are the convex and parabolic version of \eqref{main}. Then the assumption on the nonlocal term in \cite{MP4} translated to our equation is that for every $\alpha\in\mathcal{A}$, $\beta\in\mathcal{B}$, $z\in \mathbb R^d$ and $x\in \Omega$, the kernel $N_a(x,z)=0$ if $x+z\not\in \Omega$. For the associated optimal control problem this corresponds to the requirement that the controlled diffusions never exit $\bar\Omega$ and thus the boundary condition is different from the one in \eqref{main boundary}. With a similar assumption, they also studied in \cite{MP5,MP6} existence of viscosity solutions, which are Lipschitz in $x$ and $1/2$ H\"older in $t$, of Dirichlet and Neumann boundary value problems for parabolic degenerate Hamilton-Jacobi-Bellman (HJB) integro-PDEs where the nonlocal operators are of L\'evy-It\^o form. In \cite{M}, one of the authors studied semiconcavity of viscosity solutions for degenerate elliptic HJB integro-PDEs. If the control set is finite, existence of $C^{2,\alpha}$ solutions of Dirichlet boundary value problems for uniformly parabolic HJB integro-PDEs with nonlocal terms of L\'evy-It\^o type was investigated in \cite{mou2017}. At the end we mention that there are many recent $C^{1,\alpha}$ and $C^{2,\alpha}$ regularity results for purely nonlocal equations, see e.g. \cite{LL2,LL3,CD1,CD2,CK,TDZ, TZ, TJ1, TJ,Kri, Se, L1,HY}, where regularity is derived as a consequence of uniform ellipticity/parabolicity of the nonlocal part.


\medskip

The first goal of the paper is to show the $C^{1,\alpha}$ regularity estimates for \eqref{main}. We adapt the approach from \cite{LL2} using the blow up and approximation techniques. We first rescale the solution $u$ of \eqref{main} and consider
\[v(x):=u(r(x-x_0)+x_0) \text{ for each }x_0\in\Omega\text{ and $r>0$}. \]
Since the nonlocal operator $I_{\alpha\beta}$ is not scaling invariant, $v$ satisfies a different equation $\mathcal{I}^r(x_0)[x,v]=0$, see \eqref{eqL:Ir}. As \cite{LL2}, we need to find an operator, which has $C^{1,\alpha}$ regularity, such that $\mathcal{I}^r(x_0)$ is close to it with respect to some weak topology for small $r$. Fortunately, with \eqref{cond K} and \eqref{integ K} we are able to prove that $\mathcal{I}^r(x_0)$ converges to a uniformly elliptic local operator as $r\to 0$, see Lemma \ref{lem:weak con}. {In the Lemma, we do not blow up the operator at one point in $\Omega$, instead we consider operators of the form $\{\mathcal{I}^{r_k}(x_k)\}_k$ with $r_k\to 0$. This leads to the weak convergence of $\{\mathcal{I}^r(x_0)\}_r$ independent of $x_0$.} We then apply Lemma \ref{lem:weak con} to prove an approximation Lemma, see Lemma \ref{compare}, and the following $C^{1,\alpha}$ regularity. We state in an informal way here and will give the full result in Theorem \ref{thm 3.1}.
\begin{theorem}
If $u$ is a viscosity solution of \eqref{main} in $B_2$, then there exist constants $0<\alpha<1$ and $C>0$ such that
\begin{equation*}
\|u \|_{C^{1,\alpha}(B_{1})}\leq C\left(\|u\|_{L^\infty(\mathbb R^d)}+\sup_{\alpha\in\mathcal{A},\beta\in\mathcal{B}}\|f_{\alpha\beta}\|_{L^\infty(B_2)}\right).
\end{equation*}
\end{theorem}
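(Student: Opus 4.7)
The plan is to follow the blow-up/approximation strategy of \cite{LL2}. The heart of the matter is an \emph{improvement of flatness} statement at every $x_0 \in B_1$: if $u$ is sufficiently close (in $L^\infty(B_1)$) to an affine function $\ell$ with controlled slope, and the data $f_{\alpha\beta}$ is suitably small, then on a smaller ball $B_\rho(x_0)$ one can find a new affine function $\ell'$, with $|\ell-\ell'|$ controlled, to which $u$ is even closer in a scale-invariant sense. Iterating this dichotomy at geometric scales $\rho^k$, the affine approximants converge to the first-order Taylor expansion of $u$ at $x_0$, with errors decaying like $\rho^{k(1+\alpha)}$; this yields pointwise $C^{1,\alpha}$ regularity at $x_0$. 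Since the construction is uniform in $x_0\in B_1$, it gives the stated $C^{1,\alpha}(B_1)$ estimate.

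After normalizing so that $\|u\|_{L^\infty(\mathbb{R}^d)}+\sup_{\alpha,\beta}\|f_{\alpha\beta}\|_{L^\infty(B_2)}\leq 1$, the improvement of flatness is proved by contradiction and compactness. Assume no universal choice of constants makes the dichotomy work. Then one extracts sequences $x_j\to \bar x\in \overline{B_1}$, scales $r_j\to 0$, data tending to zero, and rescaled functions $v_j$ solving equations of the form $\mathcal{I}^{r_j}(x_j)[y,v_j]=\tilde f^{(j)}(y)$ with $\|v_j\|_{L^\infty(B_1)}\leq 1$, yet such that no affine function approximates $v_j$ on $B_\rho$ to the required accuracy. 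By Lemma~\ref{lem:weak con} the operators $\mathcal{I}^{r_j}(x_j)$ converge weakly to a uniformly elliptic local HJBI operator, and by the uniform H\"older estimate from \cite{mou2018} the $v_j$ are locally equicontinuous. Viscosity stability then yields a subsequential limit $v_\infty$ solving the local limit equation, and Krylov--Safonov / Caffarelli--Cabr\'e interior theory gives $v_\infty\in C^{1,\alpha}$ near the origin. The affine part of $v_\infty$ at the origin provides, for large $j$, a good affine approximant of $v_j$ on $B_\rho$ by uniform convergence, contradicting the failure assumption. The quantitative packaging of this step is precisely Lemma~\ref{compare}.

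The principal obstacle, emphasized in the introduction, is that the nonlocal operators $I_{\alpha\beta}$ are neither scale invariant nor translation invariant, and that the hypothesis~\eqref{integ K} supplies only a second moment of $K$ near the origin and mere integrability at infinity; in particular $K$ need not admit any first moment on $B_1^c$, so $I_{\alpha\beta}$ is ill-defined on affine functions. This rules out the naive device of subtracting a global affine approximant and studying the difference as a new equation on all of $\mathbb{R}^d$. The theorem's dependence on $\|u\|_{L^\infty(\mathbb{R}^d)}$, rather than a purely local norm, enters exactly here: it controls the tail of the rescaled solutions $v_j$ outside $B_1$, so that the far-field contribution to the integral in~\eqref{eq:nonlocal operator} is tamed by the uniform bound on $u$ together with the integrability $\int_{B_1^c} K(z)\,dz<\infty$ implied by~\eqref{integ K}. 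Ensuring that the entire family of rescaled equations lies in a single compact class to which Lemma~\ref{lem:weak con} and Lemma~\ref{compare} apply, uniformly in the base point $x_0$, is what makes the compactness-and-contradiction argument yield a constant $C$ independent of $x_0\in B_1$.
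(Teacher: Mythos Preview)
Your overall architecture---iterate an improvement-of-flatness lemma obtained from Lemma~\ref{compare}, with the compactness coming from the uniform H\"older estimate and the weak convergence from Lemma~\ref{lem:weak con}---matches the paper. The gap is in the step you flag but do not resolve: handling the affine subtraction under the weak hypothesis~\eqref{integ K}.

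You correctly observe that $I_{\alpha\beta}$ is not defined on affine functions, and you suggest that the global bound $\|u\|_{L^\infty(\mathbb{R}^d)}$ tames the tails of the rescaled functions $v_j$. But this does not suffice. If $v_j(x)=(u-\ell_j)(r_j x)/r_j^{1+\alpha}$, then for $|x|$ large the numerator is merely $O(1)$ (bounded by $\|u\|_\infty+|\ell_j|$), so $|v_j(x)|\sim r_j^{-(1+\alpha)}$, which blows up as $r_j\to 0$. The nonlocal term applied to $v_j$ then contains a contribution of size $r_j^{-(1+\alpha)}\int_{B_{1/r_j}^c}K^{r_j}(z)\,dz$, which need not be small. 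So the family $\{v_j\}$ is neither uniformly bounded on $\mathbb{R}^d$ nor a priori admissible for Lemma~\ref{compare}, and you have not explained how the iteration closes.

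The paper's fix involves two additional devices you omit. First, one replaces $\ell_k$ by $\xi\ell_k$ with $\xi\in C_c^\infty(\mathbb{R}^d)$, so that $u-\xi\ell_k$ is globally bounded; this forces one to track an induction hypothesis (condition~\eqref{cond4} in the paper) giving polynomial growth $|w_k(x)|\lesssim |x|^{1+\alpha'}$ for $|x|\geq 2$, with $\alpha<\alpha'<\beta$. Second, even with this, $w_k$ is not uniformly bounded in $k$, so one further truncates to $w_k':=\max\{-1,\min\{1,w_k\}\}$ and must verify that the error $I^{r_k}_{\alpha\beta}[x,w_k'-w_k]$ is $o(1)$ as the base scale $\delta\to 0$; this is where the growth bound \eqref{cond4} and the integrability~\eqref{integ K} are combined in a nontrivial way (splitting the integral over $B_{1/2}^c\cap B_{s^{-1/2}}$, $B_{s^{-1/2}}^c\cap B_{s^{-1}}$, and $B_{s^{-1}}^c$). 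Without these two steps and the accompanying error estimate, the improvement-of-flatness iteration does not propagate, and your compactness argument cannot be applied at each scale.
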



The idea of the proof is similar to Theorem 52\cite{LL2}. However, because of the non-scale invariant nature of $\mathcal{I}$ and the weak integrability assumption \eqref{integ K}, there are several difficulties needed to be addressed. On one hand, only with \eqref{integ K}, any unbounded function can not be a test function of the nonlocal operator $I_{\alpha\beta}$. Unlike Theorem 52\cite{LL2}, we need to consider inductively \[w_k:=\left(u-\xi l_k\right)\left(\left(\nu^k(x-x_0)+x_0\right)\right)/\nu^{k(1+\alpha)}\] where $\xi\in C_c^\infty(\mathbb R^d)$, $l_k$ is a first order Taylor expansion of the approximation of $u$ in each scale at $x_0$ and $\nu\in (0,1)$ is a sufficiently small constant. Another challenge, caused by \eqref{integ K}, is that, to apply the approximation Lemma on $\{w_k\}_k$, we need $\{w_k\}_k$ to be uniformly bounded. This is not the case since we keep zooming in on $\xi$. So we need to further cut off $w_k$ uniformly by $\max\{-1,\min\{1,w_k\}\}$. And thus we need to take care of the errors generated by the cut-offs. On the other hand, $\{w_k\}_k$ is needed to be compact to apply the approximation Lemma. Because $I_{\alpha\beta}$ is not scale invariant, $w_k$ satisfies different maximal equations for each $k$. Thanks to Theorem 4.1\cite{mou2018}, the compactness of $\{w_k\}_k$ can be obtained by a uniform interior $C^\alpha$ estimate for $\{w_k\}_k$. We want to mention that, in \cite{TJ1, TJ}, Tianling Jin and Jingang Xiong also used an approximation argument to derive Schauder estimates for non-translation invariant purely nonlocal linear and fully nonlinear equations.


\medskip

After establishing the $C^{1,\alpha}$ estimates, in Section 5 we seek for $C^{2,\alpha}$ estimates for convex integro-PDEs i.e. \eqref{main} where $\mathcal{B}$ is singleton. We do not follow the ideas in \cite{CC,LL1} to first prove $C^{1,1}$ regularity estimates, and then combine the weak $L^\epsilon$ estimate and the oscillation lemma to obtain $C^{2,\alpha}$ regularity estimates. 
{A small drawback of this method is that, more regularity assumptions on the kernel are required.} {Another method given in \cite{Se} mainly makes use of a Liouville theorem and a blow-up procedure, where it is important that the kernel has an order which is preserved under some rescaling and the rescaled solutions have the same growth as the original one. However for us, to apply their method, more efforts are required to study the rescaled equations, and the corresponding solutions which might have growing $L^\infty$ bounds as the scale become small.} It is because of the non-scale invariant nature of $\mathcal{I}$ and the weak integrability assumption \eqref{integ K}. {We find a simpler way to solve this problem which is the method developed by the first author in \cite{mou2017}.} We first prove the a priori $C^{2,\alpha}$ estimate, which is the second main result in the manuscript. That is
\begin{theorem}
If $u\in C^2(B_2)\cap C^\alpha(\mathbb R^d)$ solves $\eqref{main}$, where $\mathcal{B}$ is singleton, classically in $B_2$, then
 \begin{equation*}
 \|u\|_{C^{2,\alpha}(B_1)}\leq C\left(\|u\|_{C^{\alpha}(\mathbb R^d)}+\sup_{\alpha\in\mathcal{A}}\|f_{\alpha}\|_{C^\alpha(B_2)}\right).
 \end{equation*}
\end{theorem}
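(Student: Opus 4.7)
The plan is to establish this a priori $C^{2,\alpha}$ estimate by an iterative approximation scheme at each point $x_0\in B_1$, in the spirit of the $C^{1,\alpha}$ argument of Theorem \ref{thm 3.1} but now using quadratic rather than affine approximations. The convexity hypothesis ($\mathcal{B}$ singleton) is what allows us to upgrade the regularity by one order.

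First I would prove the analogue of the weak-convergence lemma (Lemma \ref{lem:weak con}) for quadratic blow-ups: as $r\to 0$, the rescaled operator $\mathcal{I}^r(x_0)$ converges to a translation-invariant, second order, uniformly elliptic, convex HJB operator with frozen coefficients $a_\alpha(x_0)$. For such a local convex equation, the classical Evans--Krylov theorem provides $C^{2,\alpha}$ interior estimates. Combining these two facts yields a quadratic approximation lemma: any viscosity solution $v$ of $\mathcal{I}^r(x_0)[\cdot,v]=0$ in $B_2$ with $\|v\|_{L^\infty(\mathbb R^d)}\le 1$ is uniformly close, in $B_1$, to a quadratic polynomial, provided the scale $r$ is small and the frozen right-hand side is small.

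Next comes the iteration. Fix a small $\nu\in(0,1)$ and inductively construct quadratic polynomials $P_k$, with $P_0\equiv 0$, such that
\[
\sup_{B_{\nu^k}(x_0)}|u-P_k|\le \nu^{k(2+\alpha)}\qquad\text{and}\qquad |P_{k+1}-P_k|\le C\nu^{k(2+\alpha)}\text{ on }B_{\nu^k}(x_0).
\]
The inductive step applies the quadratic approximation lemma to the renormalized function
\[
w_k(x):=\nu^{-k(2+\alpha)}\bigl[(u-P_k)\bigl(\nu^k(x-x_0)+x_0\bigr)\bigr],
\]
which solves a rescaled equation $\mathcal{I}^{\nu^k}(x_0)[\cdot,w_k]=g_k$ with right-hand side $g_k$ controlled via the $C^\alpha$ regularity of $a_\alpha, b_\alpha, c_\alpha, f_\alpha$ in $x$ and of $N_{\alpha}(\cdot,z)$ in $x$. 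Summing the increments $P_{k+1}-P_k$ produces a limiting quadratic Taylor polynomial at $x_0$ with the quantitative $C^{2,\alpha}$ remainder, and since $x_0\in B_1$ is arbitrary this yields the claimed pointwise Schauder estimate.

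The main obstacles are the same two that complicated the $C^{1,\alpha}$ proof, now with quadratic corrections. Because $\mathcal{I}$ is not scale invariant, each $w_k$ solves a \emph{different} equation, and one must verify uniform ellipticity and convexity of the blow-up limit (this is where $|\mathcal{B}|=1$ is essential, as it preserves convexity under passage to the weak limit). Moreover, since we now subtract a quadratic polynomial, the renormalized $w_k$ need not be globally bounded, so one must truncate by $\max\{-1,\min\{1,w_k\}\}$ and control the nonlocal error from the truncation; the quadratic growth of $P_k$ in the far field produces an outside contribution that, after the $\nu^{-k(2+\alpha)}$ normalization, is borderline but remains integrable against $N_{\alpha}(x,z)\le K(z)$ thanks to the $|z|^2$ weight in \eqref{integ K}. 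Finally, to apply the approximation lemma one needs compactness of $\{w_k\}$ in $C^\alpha_{\mathrm{loc}}$, which comes from the uniform interior Hölder estimate of Theorem 4.1 of \cite{mou2018}, applied uniformly to the family of rescaled equations. The bookkeeping of these three error contributions (coefficient freezing, truncation, and nonlocal tail) is the delicate part; everything else is a careful adaptation of the iteration already carried out for the $C^{1,\alpha}$ theorem.
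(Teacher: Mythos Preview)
Your proposed approach is genuinely different from the paper's. You plan to re-run the $C^{1,\alpha}$ iteration one order higher, using quadratic approximations and Evans--Krylov for the blow-up limit. The paper instead takes a much shorter perturbative route: it multiplies $u$ by a cutoff $\psi\in C_c^\infty(\Omega)$, moves the entire nonlocal term $\tilde I_\alpha[x,\psi u]$ (together with all lower-order terms) to the right-hand side, and views $\psi u$ as a classical solution of a purely \emph{local} convex HJB equation with source $\tilde f_\alpha$. The $C^{1,\alpha}$ estimate already established (Corollary~\ref{col:C1alpha}) makes every piece of $\tilde f_\alpha$ uniformly $C^\alpha$, except for the short-range part $I_1(x)=\int_{B_\epsilon}(\cdots)N_\alpha(x,z)\,dz$ of the nonlocal term, whose $C^\alpha$ seminorm is bounded by $c_\epsilon\|\psi u\|_{C^{2,\alpha}}$ with $c_\epsilon\to 0$ as $\epsilon\to 0$. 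One then applies the classical interior Schauder estimate for local convex equations and absorbs the $c_\epsilon$ term into the left-hand side. This is considerably simpler than a second iteration; convexity enters only through the local Evans--Krylov theorem, and the three-way bookkeeping you describe is avoided entirely.

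There is also a genuine concern with your ``borderline'' tail claim. In the $C^{1,\alpha}$ iteration the tail of $w_k$ grew like $|x|^{1+\alpha'}$, which is dominated by $|x|^2$ on $\{|x|\ge 1\}$ and hence is controlled against $K^s$ via the hypothesis $\int\min\{|z|^2,1\}K(z)\,dz<\infty$; this is exactly how the truncation error $\omega_1(\delta)$ was shown to vanish in the proof of Theorem~\ref{thm 3.1}. At the $C^{2,\alpha}$ level the analogous tail grows like $\min\{|x|^{2+\alpha'},\nu^{-k(2+\alpha)}\}$, and the same change of variables now produces factors of the form $(\delta/s)^{\text{positive}}$ (with $s=\delta\nu^k$) that are \emph{not} uniformly small in $k$ under \eqref{integ K} alone; the $|z|^2$ weight no longer dominates the $|z|^{2+\alpha'}$ growth. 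The paper in fact flags this difficulty in the introduction (``growing $L^\infty$ bounds as the scale becomes small'') as a reason for preferring the perturbative argument. Your scheme might be salvageable under an additional integrability assumption on $K$ or a sharper splitting, but as written the tail control is the step that does not go through.
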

See Theorem \ref{a priori estimate} for the full result. In the proof, we use the $C^{1,\alpha}$ estimate given in Section 4 to show that the nonlocal term is indeed a small perturbation compared to the second order term in \eqref{main}. With the a priori estimate, we then take a sequence of functions $\{u_q\}_q$ and each solves an integro-PDE, in the viscosity sense, where we truncate the kernels of the nonlocal term. We notice that the comparison principle for such truncated integro-PDE is not necessary to hold. However, the existence of such sequence is guaranteed by Perron's method in \cite{mou2018}. Without loss of generality, we can assume the solutions $\{u_q\}_q$ are classical solutions since we can consider one part of the nonlocal term as a first order term and the other part as a zero order term, and apply the classical $C^{2,\alpha}$ estimates. Using the a priori $C^{2,\alpha}$ estimate, we obtain a uniform $C^{2,\alpha}$ estimate for all $u_q$ and thus we can pass the limit to obtain a classical solution $u$ and its corresponding $C^{2,\alpha}$ estimate. At the end, by comparing viscosity solutions with the classical solution we are able to show that the classical solution $u$ we construct is the unique solution among all viscosity solutions.

\section{Notation and Preliminaries}

Throughout the paper, we assume that $\Omega$ is a bounded domain in $\mathbb R^d$. By $B_r(x)$ we denote a ball in $\mathbb{R}^d$ centered at $x$ with radius $r$. We write $B_r:=B_r(0)$ for abbreviation.

Let $O$ be a domain in $\mathbb{R}^d$. We say that $O$ satisfies the \textit{uniform exterior ball condition} if there exists $r_*>0$ such that for any $x\in \partial O$, there is a ball $B_{r_*}(y)\subset  O^c$ and $\{x\}=\partial B_{r_*} (y)\cap\partial O$. 

For any $U,V\subset\mathbb{R}^d$, define the distance between the two sets as
\[d(U,V):=\inf\{|x-y|,x\in U,y\in V\}.\]
For $\delta>0$ we denote by
\[O_\delta:=\{x\in O, d(x,O^c)\geq \delta\}.\]

\medskip

For $n\in\mathbb{N}$ and $\alpha\in (0,1)$, we denote by $C^{n,\alpha}(O)$ the set of functions defined in $\bar{O}$ whose $n$th derivatives are uniformly $\alpha$-H\"{o}lder continuous in $\bar{O}$. By $C^{n,\alpha}_{\rm loc}(O)$ we denote all functions whose $n$th derivatives are locally $\alpha$-H\"{o}lder continuous in $O$. We will briefly write $C^{\alpha}(O):=C^{0,\alpha}(O)$ and  $C^{\alpha}_{\rm loc}(O):=C^{0,\alpha}_{\rm loc}(O)$.
For any $u\in
C^{n,\alpha}(O)$, we define
\[
[u]_{n, \alpha; O}:=\left\{\begin{array}{ll}
\sup_{x\in O,|j|=n}|\partial^{j}u(x)|,&\hbox{if}\, \alpha=0;\\
\sup_{x, y\in O, x\not
=y,|j|=n}\frac{|\partial^{j}u(x)-\partial^{j}u(y)|}{|x-y|^{\alpha}},
&\hbox{if}\, \alpha>0,\end{array}\right.
\]
and
\[
\|u\|_{C^{n, \alpha}( O)}:=\left\{\begin{array}{ll}
\sum_{j=0}^{n}[u]_{j, 0, O}, &\hbox{if}\, \alpha=0;\\
\|u\|_{C^{n,0}( O)}+[u]_{n, \alpha; O}, &\hbox{if}\,
\alpha>0.\end{array}\right.
\]
We write ${\rm USC}(\mathbb{R}^d)$ (${\rm LSC}(\mathbb{R}^d)$) for the space of \textit{upper (lower) semicontinuous} functions in $\mathbb{R}^d$.
For any function $u$, we denote $u^+:=\max\{u,0\}$ and $u^-:=-\min\{u,0\}$.
We denote by $\mathcal{M}^d$ the class of $d\times d$ symmetric matrices with real entries.

\medskip

Let us introduce the rescaled operator $\mathcal{I}^r$ corresponding to $\mathcal{I}$:
\begin{equation}\label{eq:rescale fulloperator}
    \mathcal{I}^r(x_0) [x,u]:=r^2\mathcal{I}[r(x-x_0)+x_0,u(r^{-1}(\cdot-x_0)+x_0)]. 
\end{equation}
We define
\begin{equation}\label{eq:rescale nonlocal operator}
    I_{\alpha\beta}^r(x_0)[x,u]=\int_{\mathbb{R}^d}( u(x+z)-u(x)-\chi_{B_\frac{1}{r}}(z)Du(x)\cdot z)N^r_{\alpha\beta}(x_0,x,z)dz
\end{equation}
where
\[N_{\alpha\beta}^r(x_0,x,z)=r^{d+2}N_{\alpha\beta}(r(x-x_0)+x_0,rz).\]

Then we have
\[0\leq N^r_{\alpha\beta}(x_0,x,z)\leq K^r(z)\]
where $K^r(z):= r^{d+2}K(rz)$ satisfies
\[\int_{\mathbb{R}^d}\min\{|z|^2,1/r^2\}K^r(z)dz<+\infty.
\]
Using \eqref{eq:rescale fulloperator} and \eqref{eq:rescale nonlocal operator}
\begin{align}\label{eqL:Ir}
    &\mathcal{I}^r(x_0)[x,u]=\sup_{\alpha\in\mathcal{A}}\inf_{\beta\in\mathcal{B}}\left\{-\tr{ \, a_{\alpha\beta}(r(x-x_0)+x_0) D^2 u(x)}-I^r_{\alpha\beta}(x_0)[x,u]\right.\nonumber\\
    &\quad\left.+r\,b_{\alpha\beta}(r(x-x_0)+x_0)\cdot Du(x)+r^2\,c_{\alpha\beta}(r(x-x_0)+x_0)u(x)+r^2\,f_{\alpha\beta}(r(x-x_0)+x_0)\right\}.
\end{align} 
For abbreviation, we write
\[\mathcal{I}^ru(x):=\mathcal{I}^r(0)[x,u].\]

For $\Lambda>\lambda>0$ and any kernel $K$ satisfying \eqref{integ K},
we define the extremal operators for the second order term and the nonlocal term:
\begin{align*}
&    \mathcal{P}^+(D^2 u)(x):=\sup\{tr(A D^2u(x)),\, A\in \mathcal{M}^d, \lambda I\leq A\leq \Lambda I\},\\
&    \mathcal{P}^-(D^2 u)(x):=\inf\{tr(A D^2u(x)),\, A\in \mathcal{M}^d, \lambda I\leq A\leq \Lambda I\},
\end{align*}
\begin{align*}
&    \mathcal{P}_{K,r}^+( u)(x):=\int_{\mathbb{R}^d}\left(u(x+z)-u(x)-1_{B_{1/r}}(z)Du(x)\cdot z\right)^+ K^r(z)dz,\\
&    \mathcal{P}_{K,r}^-(u)(x):=\int_{\mathbb{R}^d}\left(u(x+z)-u(x)-1_{B_{1/r}}(z)Du(x)\cdot z\right)^- K^r(z)dz.
\end{align*}

\medskip

We denote \textit{half relaxed limits} of a sequence of functions $\{u_n\}_n$ by:
\[u^*(x)=\limsup_{k\rightarrow \infty}\left\{ u_n(x'):\,n\geq k,\, x'\in B_{1/k}(x)\right\}\]
and
\[u_*(x)=\liminf_{k\rightarrow \infty}\left\{ u_n(x'):\,n\geq k,\, x'\in B_{1/k}(x)\right\}.\]
Then we say $u_n$ \textit{$\Gamma$-converge} to $u$ if $u^*(x)=u_*(x)=u$.

\medskip

For the operator $\mathcal{I}$, we make the following assumptions: in $\overline{\Omega}$
\begin{itemize}
    \item[]  (A) $a_{\alpha\beta}(\cdot),b_{\alpha\beta}(\cdot),c_{\alpha\beta}(\cdot), f_{\alpha\beta}(\cdot)$ are uniformly continuous and bounded,\\
    
    \item[] (B) $\lambda I\leq a_{\alpha\beta}(\cdot)\leq \Lambda I$ for some $0<\lambda\leq\Lambda$,\\
    
    \item[] (C) for each $\sigma>0$, $N_{\alpha\beta}(\cdot,z)$ is uniformly continuous for each $z\in B_\sigma^c$ and \eqref{cond K} holds,
\end{itemize}
all uniformly in $\alpha\in A,\beta\in  B$.

In the last section we will further make use of the following conditions: in $\overline{\Omega}$
\begin{itemize}
    \item[]  (A') $a_{\alpha\beta}(\cdot),b_{\alpha\beta}(\cdot),c_{\alpha\beta}(\cdot), f_{\alpha\beta}(\cdot)$ are uniformly ${\alpha}$-H\"{o}lder continuous and bounded,\\
    
    \item[] (C') $\int_{\mathbb{R}^d}\min\{1,|z|^2\}N_{\alpha\beta}(\cdot,z)dz$ is uniformly ${\alpha}$-H\"{o}lder continuous and (C) holds,
\end{itemize}
all uniformly in $\alpha\in A,\beta\in  B$.

\medskip

By \textit{universal constants} we mean constants depending on $\lambda$, $\Lambda$, $d$, $K$, $b_{\alpha\beta}$ and $c_{\alpha\beta}$. We will denote universal constants by $C$ in the paper, possibly changing from one estimate to another.

\medskip

Now we give the definition of viscosity solutions to \eqref{main}.
\begin{definition}
A bounded function $u\in {\rm USC}(\mathbb{R}^d)$ is a viscosity subsolution of \eqref{main} if whenever $u-\varphi$ has a maximum over $\mathbb{R}^d$ at $x\in\Omega$ for some bounded test function $\varphi\in C^2(\Omega)\cap C(\mathbb{R}^d)$, then
\[\sup_{\alpha\in\mathcal{A}}\inf_{\beta\in\mathcal{B}}\left\{-\tr{ \, a_{\alpha\beta}(x) D^2 \varphi(x)}-I_{\alpha\beta}[x,\varphi]+b_{\alpha\beta}(x)\cdot D\varphi(x)+c_{\alpha\beta}(x)u(x)+f_{\alpha\beta}(x)\right\}\leq 0.  \]
A bounded function $u\in {\rm LSC}(\mathbb{R}^d)$ is a viscosity supersolution is similar with the above maximum replaced by minimum and $\leq $ replaced by $\geq$.

A bounded function $u$ is a viscosity solution of \eqref{main} if $u$ is both a viscosity subsolution and viscosity supersolution of \eqref{main}.
\end{definition}

\begin{definition}
Let $g$ be a continuous function in $\Omega^c$. A continuous bounded function $u$ is a viscosity solution of \eqref{main} with boundary data $g$ if it is a viscosity solution of \eqref{main} and $u=g$ in $\Omega^c$ .
\end{definition}

\medskip

Below we define the weak convergence of operators.
\begin{definition}\label{def conv of oper}
Let $U$ be an open subset of $\mathbb{R}^d$. A sequence of operators $\mathcal{I}^m$ is said to converge weakly to $\mathcal{I}$ in $U$ if for any test function $\varphi\in L^\infty(\mathbb R^d)\cap C^2(B_\epsilon(x_0))$ for some $B_\epsilon(x_0)\subset U$, we have
\[\mathcal{I}^m [x,\varphi]\to \mathcal{I}[x,\varphi]\quad\text{ uniformly in }B_{\epsilon/2}(x_0) \text{ as }m\to\infty.\]
\end{definition}

\medskip

At the end, we give the following $C^{\alpha}$ regularity theorem which is a crucial ingredient for proving $C^{1,\alpha}$ regularity.
\begin{lemma}\label{thm:holu1111}
Assume that $-\frac{1}{2}\leq u\leq\frac{1}{2}$ in $\mathbb R^d$ such that $u$ solves
\begin{equation*}
\mathcal{P}^+(D^2u)+\mathcal{P}_{K,r}^+(u)+C_0r|D u|\geq -f^-\quad\text{in $B_1$}
\end{equation*}
and
 \begin{equation*}
 \mathcal{P}^-(D^2u)+\mathcal{P}_{K,r}^-(u)-C_0r|D u|\leq f^+\quad\text{in $B_1$}
 \end{equation*}
 in the viscosity sense for some $C_0\geq 0$ and $f\in L^d(B_1)$. Then there exist universal constants $\epsilon_*$, $\alpha$ and $C$ independent of $r$ such that if $\|f\|_{L^d(B_1)}\leq\epsilon_*$ we have
\begin{equation*}
|u(x)-u(0)|\leq C|x|^\alpha.
\end{equation*}
\end{lemma}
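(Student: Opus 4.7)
My plan is to reduce the statement to the interior $C^\alpha$ regularity theorem for extremal integro-PDEs established by the first author in \cite{mou2018} (Theorem~4.1), after verifying that its hypotheses hold with constants that are uniform in $r$. The key observation is that although the rescaled kernel $K^r(z)=r^{d+2}K(rz)$ depends on $r$, the change of variables $z'=rz$ gives
\begin{equation*}
\int_{\mathbb R^d}\min\{|z|^2,1/r^2\}K^r(z)\,dz=\int_{\mathbb R^d}\min\{|z'|^2,1\}K(z')\,dz',
\end{equation*}
so every structural constant in the translation-invariant nonlocal theory is controlled by $K$ alone, independently of $r$. The gradient perturbation $C_0 r|Du|$ is also harmless for $r\leq 1$ since its coefficient is uniformly bounded and the local part is uniformly elliptic, so standard first-order perturbation arguments apply.

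If one were to reprove the estimate from scratch, the approach would follow the classical Krylov--Safonov/Caffarelli--Silvestre scheme adapted to this mixed local-nonlocal setting. First, one derives an ABP-type point estimate for the supersolution of the $\mathcal P^-$ inequality, using uniform ellipticity of the local part to produce a contact-set bound controlled by $\|f^+\|_{L^d(B_1)}$. Next, one produces a weak $L^\varepsilon$ / measure decay estimate via a Calder\'on--Zygmund stacked-cubes argument; here the barriers are chosen to be globally bounded on $\mathbb R^d$ so that $\mathcal P^+_{K,r}$ applied to them is dominated by the universal tail integral above. Combining this with the normalization $-\tfrac12\leq u\leq\tfrac12$ and taking $\epsilon_*$ small enough yields a diminish-of-oscillation statement $\osc_{B_{1/2}} u\leq \theta\,\osc_{B_1} u$ with a universal $\theta\in(0,1)$.

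Finally, a standard dyadic iteration of the oscillation decay, together with the scaling $v(x)=u(\rho x)$ (which preserves the class of inequalities with the same kernel $K^{r\rho}$ and a smaller first-order coefficient $C_0 r\rho$), delivers the pointwise H\"older estimate $|u(x)-u(0)|\leq C|x|^\alpha$ with universal constants $\alpha,C$. The main obstacle throughout is maintaining $r$-independence of every constant. This is genuinely delicate because the family $\mathcal P^\pm_{K,r}$ is not of a fixed fractional order, so the scale-invariant arguments available for kernels of order $\sigma\in(0,2)$ do not apply directly. Both difficulties are circumvented by the two points above: the tail integral of $K^r$ is universally bounded via the change of variables, and the barriers used in the $L^\varepsilon$-estimate can be arranged to be globally bounded so their nonlocal extremal values inherit that universal bound.
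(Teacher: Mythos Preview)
Your approach is essentially the same as the paper's: reduce to Theorem~4.1 of \cite{mou2018} and argue that the relevant structural constants are uniform in $r$. The paper's proof is a two-line citation, pointing out that the case $r=1$ is exactly Theorem~4.1 of \cite{mou2018} and that the general case follows because the key estimate (Corollary~3.14 of \cite{mou2018}) is independent of $r$; your change-of-variables identity $\int\min\{|z|^2,1/r^2\}K^r(z)\,dz=\int\min\{|z'|^2,1\}K(z')\,dz'$ is precisely the mechanism behind that $r$-independence, and your sketch of the ABP/$L^\varepsilon$/oscillation-decay machinery simply unpacks what that theorem contains.
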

\begin{proof}
If $r=1$, then the theorem is exact Theorem 4.1\cite{mou2018}. The proof follows by almost the same argument as that in Theorem 4.1\cite{mou2018} since the estimate obtained in Corollary 3.14\cite{mou2018} is independent of $r$.
\end{proof}


\section{Weak Convergence Lemma}

In this section we consider $\Omega=B_2$.
Recall \eqref{eqL:Ir}, we study the operators $\mathcal{I}^{r_k}_k:=\mathcal{I}^{r_k}_k(x_k)[x,u]$ with $x_k\in B_1$. Denoting $\tilde{x}_k:=r_k(x-x_k)+x_k$, then
\begin{equation}\label{def Irkk}
    \begin{aligned}
        \mathcal{I}_k^{r_k}\varphi(x)&=\sup_{\alpha\in\mathcal{A}_k}\inf_{\beta\in\mathcal{B}_k}\left\{-\tr{ \, a_{\alpha\beta}(\tilde{x}_k) D^2 \varphi(x)}-I_{\alpha\beta}^{r_k}(x_k)[x,\varphi]+r_kb_{\alpha\beta}(\tilde{x}_k)\cdot D\varphi(x)+\right.\\
        &\quad\quad\left.r_k^2c_{\alpha\beta}(\tilde{x}_k)\varphi(x)+r_k^2f_{\alpha\beta}(\tilde{x}_k)\right\}.
    \end{aligned}
\end{equation}


    
    
\begin{lemma}\label{lem:weak con}
{Suppose we have a sequence of operators $\mathcal{I}_k^{r_k}$ satisfying the requirements (A)(B)(C). Suppose for some $z\in\overline{B_1}$, $B_1\ni x_k\to z$ and $r_k\to 0$ as $k\to\infty$.}
Then $\mathcal{I}_k^{r_k}$ converges weakly to some $\mathcal{I}^0$ in $B_2$ where
\begin{equation}
    \label{lim operator}\mathcal{I}^0u(x):=\sup_{\alpha\in\mathcal{A}}\inf_{\beta\in\mathcal{B}}\{-\tr{ \, a_{\alpha\beta}(z) D^2 u(x)}\}.
\end{equation}
\end{lemma}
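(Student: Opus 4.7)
The plan is to verify Definition~\ref{def conv of oper} termwise. Fix a test function $\varphi\in L^\infty(\mathbb R^d)\cap C^2(B_\epsilon(x_0))$ with $B_\epsilon(x_0)\subset B_2$, and abbreviate $\tilde x_k:=r_k(x-x_k)+x_k$. Applying the elementary bound $|\sup_\alpha\inf_\beta F_{\alpha\beta}-\sup_\alpha\inf_\beta G_{\alpha\beta}|\le \sup_{\alpha,\beta}|F_{\alpha\beta}-G_{\alpha\beta}|$ to \eqref{def Irkk} gives
\[|\mathcal{I}_k^{r_k}\varphi(x)-\mathcal{I}^0\varphi(x)|\le \sup_{\alpha,\beta}\bigl|\tr[(a_{\alpha\beta}(\tilde x_k)-a_{\alpha\beta}(z))D^2\varphi(x)]\bigr|+\sup_{\alpha,\beta}|I_{\alpha\beta}^{r_k}(x_k)[x,\varphi]|+O(r_k),\]
where the $O(r_k)$ absorbs the $b,c,f$ terms using assumption (A). Since $a_{\alpha\beta}(\cdot)$ is uniformly continuous uniformly in $\alpha,\beta$, and $\tilde x_k\to z$ uniformly for $x\in B_{\epsilon/2}(x_0)$ (as $\tilde x_k-z=r_k(x-x_k)+(x_k-z)$ with $x_k\to z$ and $r_k\to 0$), the trace and $O(r_k)$ terms vanish uniformly. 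The remaining task is to prove $\sup_{\alpha,\beta}|I_{\alpha\beta}^{r_k}(x_k)[x,\varphi]|\to 0$ uniformly in $x\in B_{\epsilon/2}(x_0)$.

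To control the nonlocal term, I would use the pointwise bound $N_{\alpha\beta}^{r_k}\le K^{r_k}$ and split the integration domain at $|z|=\delta:=\epsilon/4$, chosen so that $x+z\in B_\epsilon(x_0)$ whenever $x\in B_{\epsilon/2}(x_0)$ and $|z|<\delta$. On $\{|z|<\delta\}$ the Taylor remainder gives the pointwise bound $C|z|^2\|D^2\varphi\|_{L^\infty(B_\epsilon(x_0))}$; substituting $y=r_kz$, this piece becomes $C\int_{|y|<r_k\delta}|y|^2K(y)dy$, which vanishes by \eqref{integ K}. On $\{|z|\ge\delta\}$ I use the crude estimate $|\varphi(x+z)-\varphi(x)-\chi_{B_{1/r_k}}(z)D\varphi(x)\cdot z|\le 2\|\varphi\|_\infty+\|D\varphi\|_\infty|z|\chi_{B_{1/r_k}}(z)$, which after the same substitution reduces the problem to controlling
\[C_1 r_k^2\!\int_{|y|\ge r_k\delta}\!K(y)dy+C_2 r_k\!\int_{r_k\delta\le|y|<1}\!|y|K(y)dy.\]

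The main obstacle is that under only \eqref{integ K} the integral $\int_{r_k\delta\le|y|<1}K(y)dy$ may diverge like $r_k^{-2}$, so the naive Chebyshev bound $(r_k\delta)^{-2}\int|y|^2 K(y)dy$ only yields a non-vanishing constant after multiplication by $r_k^2$. To extract decay I would split $\{r_k\delta\le|y|<1\}$ dyadically at $|y|=\sqrt{r_k}$: on $\{r_k\delta\le|y|\le\sqrt{r_k}\}$ the elementary inequalities $r_k^2\le\delta^{-2}|y|^2$ and $r_k|y|\le\delta^{-1}|y|^2$ dominate both integrands by $C|y|^2K(y)$ on a shrinking set, so the integral vanishes by \eqref{integ K}; on $\{\sqrt{r_k}\le|y|<1\}$ the inequalities $r_k^2\le r_k|y|^2$ and $r_k|y|\le\sqrt{r_k}\,|y|^2$ bound both integrands by $\sqrt{r_k}\,|y|^2K(y)$ up to constants, giving an $O(\sqrt{r_k})$ contribution; finally the far tail $\{|y|\ge 1\}$ contributes at most $Cr_k^2\int_{|y|\ge 1}K(y)dy\to 0$. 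Assembling these estimates yields $\sup_{\alpha,\beta}|I_{\alpha\beta}^{r_k}(x_k)[x,\varphi]|=o(1)$ uniformly in $x\in B_{\epsilon/2}(x_0)$, which completes the argument.
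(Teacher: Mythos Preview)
Your proof is correct and follows essentially the same approach as the paper: both arguments handle the second-order and lower-order terms via uniform continuity and boundedness, and both control the nonlocal term by a Taylor estimate on $\{|z|<\epsilon/4\}$ together with a further split of the outer region at the scale $|z|\sim r_k^{-1/2}$ (equivalently $|y|\sim\sqrt{r_k}$ after your substitution $y=r_kz$), using the same Chebyshev-type comparisons $r_k\lesssim |y|$ and $\sqrt{r_k}\lesssim |y|$ on the two pieces. The only cosmetic difference is that the paper splits before substituting and keeps three pieces $I_1,I_2,I_3$, whereas you substitute first and then split the tail into three sub-pieces; the estimates line up term by term.
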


\begin{proof}

Fix $x_0\in B_2$ and a test function $\varphi\in C^2(B_\epsilon(x_0))\cap L^\infty(\mathbb R^d)$ for some $B_\epsilon(x_0)\subset B_2$.

It is obvious that $r_kb_{\alpha\beta}(\tilde{x}_k)\cdot D\varphi(x),r_k^2c_{\alpha\beta}(\tilde{x}_k)\varphi(x),r_k^2f_{\alpha\beta}(\tilde{x}_k)\to 0$ uniformly in $x\in B_{\epsilon/2}(x_0),\alpha\in \cup_k\mathcal{A},\beta\in\cup_k\mathcal{B}$. The nonlocal term 
\begin{align*}
    \left|I_{\alpha\beta}^{r_k}[x,\varphi]\right|&\leq\int_{\mathbb{R}^d} \left|\varphi(x+z)-\varphi(x)-\chi_{B_\frac{1}{r_k}}(z)D\varphi(x)\cdot z\right| K^{r_k}(z)dz.\\
    &=\int_{B_{\epsilon/4}}+\int_{B^c_{\epsilon/4}\cap B_{{r_k^{-1/2}}}}+\int_{B^c_{r_k^{-1/2}}}=:I_1+I_2+I_3.
\end{align*}
Then the first part of the integration
\begin{align*}
    I_1
    =&\int_{B_{\epsilon/4}} \left|\varphi(x+z)-\varphi(x)-D\varphi(x)\cdot z\right| K^{r_k}(z)dz\\
    \leq &\int_{B_{\epsilon/4}} \|D^2 \varphi\|_{L^{\infty}(B_{3\epsilon/4}(x_0))}|z|^2 K^{r_k}(z)dz\\
    \leq & \|D^2 \varphi\|_{L^\infty (B_{3\epsilon/4}(x_0))}\int_{B_{r_k\epsilon/4}}|z|^2K(z)dz.
\end{align*}
The second part
\begin{align*}
    I_2
    \leq &C(\|\varphi\|_{L^\infty(\mathbb R^d)}+\|D\varphi\|_{L^\infty(B_{3\epsilon/4}(x_0))})\int_{B^c_{\epsilon/4}\cap B_{{r_k^{-1/2}}}} (1+ | z|) K^{r_k}(z)dz\\
     \leq &C(\|\varphi\|_{L^\infty(\mathbb R^d)}+\|D\varphi\|_{L^\infty(B_{3\epsilon/4}(x_0))})\int_{B^c_{r_k \epsilon/4}\cap B_{{r_k^{1/2}}}} (r_k^2+r_k |z|) K(z)dz\\
     \leq &C(\|\varphi\|_{L^\infty(\mathbb R^d)}+\|D\varphi\|_{L^\infty(B_{3\epsilon/4}(x_0))})(\epsilon^{-2}+\epsilon^{-1})\int_{B^c_{r_k \epsilon/4}\cap B_{{r_k^{1/2}}}} |z|^2K(z)dz.
\end{align*}
The last part
\begin{align*}
    I_3
    \leq &C(\|\varphi\|_{L^\infty(\mathbb R^d)}+\|D\varphi\|_{L^\infty(B_{3\epsilon/4}(x_0))})\int_{B^c_{r_k^{-1/2}}} (1+ \chi_{B_\frac{1}{r_k}}(z)| z|) K^{r_k}(z)dz\\
    \leq &C(\|\varphi\|_{L^\infty(\mathbb R^d)}+\|D\varphi\|_{L^\infty(B_{3\epsilon/4}(x_0))})\int_{B^c_{r_k^{1/2}}}  (r_k^{2}+\chi_{B_1}(z)r_k|z|)K(z)dz\\
     \leq &C(\|\varphi\|_{L^\infty(\mathbb R^d)}+\|D\varphi\|_{L^\infty(B_{3\epsilon/4}(x_0))})r_k^\frac{1}{2}\int_{ B^c_{r_k^{1/2}}}  \min\{|z|^2,1\}K(z)dz.
\end{align*}
Therefore, the integrals $I_1$, $I_2$ and $I_3$ all go to $0$ uniformly in $x\in B_{\epsilon/2}(x_0)$ as $k\to +\infty$.

We conclude that 
\begin{equation}\label{tozero}
   |I_{\alpha\beta}^{r_k}[y,\varphi]|+|r_kb_{\alpha\beta}(y)\cdot D\varphi(y)|+
        |r_k^2c_{\alpha\beta}(y)\varphi(y)|+|r_k^2f_{\alpha\beta}(y)|\leq \omega(r_k,\varphi,\epsilon,K)
\end{equation}
uniformly in $B_{\epsilon/2}(x_0)$
where $\omega(r_k,\varphi,\epsilon,K)\to 0$ as $r_k\to 0$ independent of $\alpha,\beta,x\in B_{\epsilon/2}(x_0)$. 

For the remaining second order term, since \[a_{\alpha\beta}(\tilde{x}_k)\to a_{\alpha\beta}(z) \text{ uniformly in  $x$  as  } r_k\to 0,  \]
we find
\[\mathcal{I}_k^{r_k}\varphi(x)\to \mathcal{I}^0\varphi(x) \text{ uniformly in } x\in B_{\epsilon/2}.\]

\end{proof}


\begin{lemma}\label{lem stability}Let $\{\mathcal{I}^{r_k}_k\}_k\cup\{\mathcal{I}^0\}$ are given by \eqref{def Irkk} and \eqref{lim operator} satisfying (A)(B)(C), and $\{u^k\}_k\cup\{u\}\subset {\rm LSC}(\mathbb R^d)$ be a sequence of uniformly bounded functions in $\mathbb{R}^d$ such that as $k\to\infty$\begin{align*}&r_k\to 0, \quad u^k\to u \text{ in the $\Gamma$ sense in } B_1,\quad\\ &\mathcal{I}^{r_k}_k\to \mathcal{I}^0 \text{ weakly in } B_1,\\
&\mathcal{I}^{r_k}_ku^k(x)\leq \eta_k \text{ in }B_1,\quad \text{ with }\eta_k\to 0 \text{ as }k\to\infty.\end{align*}Then $\mathcal{I}^0u(x)\leq 0$ in $B_1$.

The similar result holds if we replace ${\rm LSC}(\mathbb R^d)$ by ${\rm USC}(\mathbb R^d)$ and the two $``\leq "$s by $``\geq "$s.
\end{lemma}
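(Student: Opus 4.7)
The argument follows the standard Barles--Perthame half-relaxed limits stability scheme, adapted to the non-scale-invariant nonlocal setting. My plan is first to fix $y_0\in B_1$ and a test function $\varphi\in L^\infty(\mathbb{R}^d)\cap C^2(B_\delta(y_0))$ (with $B_{2\delta}(y_0)\subset B_1$) such that $u-\varphi$ attains a strict global maximum at $y_0$, normalized to $u(y_0)=\varphi(y_0)$. The goal is then to prove $\mathcal{I}^0\varphi(y_0)\leq 0$; because the limit operator $\mathcal{I}^0$ in \eqref{lim operator} is purely local and depends only on $D^2\varphi(y_0)$, this is the only information that needs to be produced at $y_0$.

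Next, using the $\Gamma$-convergence hypothesis $u_*=u^*=u$, I would extract a sequence of near-maximizers $y_k\to y_0$ of $u^k-\varphi$ with $u^k(y_k)\to u(y_0)$. Strictness of the maximum at $y_0$ combined with the two-sided control provided by the $\Gamma$-convergence prevents near-maximizers from drifting away from $y_0$. Applying the viscosity inequality $\mathcal{I}^{r_k}_k u^k(y_k)\leq\eta_k$ with the standard nonlocal test function $\tilde\varphi_k$ that equals $\varphi$ inside $B_\delta(y_0)$ and $u^k$ outside yields
\begin{equation*}
\sup_{\alpha}\inf_{\beta}\Bigl\{-\tr\bigl(a_{\alpha\beta}(\tilde y_k)D^2\varphi(y_k)\bigr)-I^{r_k}_{\alpha\beta}(x_k)[y_k,\tilde\varphi_k]+\text{(lower order terms)}\Bigr\}\leq\eta_k+o_k(1),
\end{equation*}
where $\tilde y_k:=r_k(y_k-x_k)+x_k$ and the $o_k(1)$ absorbs the discrepancy between the true maximizer and $y_k$.

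Passing to the limit requires splitting the rescaled nonlocal integral exactly as in Lemma \ref{lem:weak con}: a piece over $B_{\delta/4}$ where $\tilde\varphi_k=\varphi$ is smooth, an intermediate annulus contained in $B^c_{\delta/4}\cap B_{r_k^{-1/2}}$, and the tail $B^c_{r_k^{-1/2}}$. In the inner region the second-order Taylor estimate together with $\int_{B_{r_k\delta/4}}|z|^2 K(z)dz\to 0$ makes this piece vanish, while the intermediate and tail pieces are bounded by $C(\|\varphi\|_{L^\infty}+\|u^k\|_{L^\infty}+\|D\varphi\|_{L^\infty})$ times integrals of $K^{r_k}$ over shrinking sets, all of which tend to zero uniformly in $k$. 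The uniform $L^\infty$ bound on $\{u^k\}$ is precisely what allows substitution of $u^k$ for the unknown part of $\tilde\varphi_k$ without losing the vanishing of the tail. Simultaneously, by Lemma \ref{lem:weak con} the local lower-order contributions $r_k b_{\alpha\beta}(\tilde y_k)\cdot D\varphi(y_k)$, $r_k^2 c_{\alpha\beta}(\tilde y_k)\varphi(y_k)$, and $r_k^2 f_{\alpha\beta}(\tilde y_k)$ vanish uniformly in $\alpha,\beta$, and $a_{\alpha\beta}(\tilde y_k)\to a_{\alpha\beta}(z)$ uniformly. Combining these facts and letting $k\to\infty$ yields $\mathcal{I}^0\varphi(y_0)\leq 0$.

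The main obstacle is the tail piece of the nonlocal integral: $\tilde\varphi_k$ is merely $L^\infty$-bounded (not smooth) outside $B_\delta(y_0)$, so the weak convergence statement of Lemma \ref{lem:weak con} does not apply to $\tilde\varphi_k$ directly. The crucial observation is that the estimates for $I_2$ and $I_3$ in the proof of Lemma \ref{lem:weak con} use only the $L^\infty$ bound on the test function away from $y_0$, not its smoothness, so the same bounds control the $u^k$-contribution and force it to vanish uniformly. The $\mathrm{USC}$-with-$\geq$ analogue is proved symmetrically by testing $u-\varphi$ at a strict minimum and reversing the inequalities throughout.
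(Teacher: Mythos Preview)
Your approach is correct and workable, but it takes a different, more laborious route than the paper. The paper keeps a single globally defined bounded test function $\varphi\in C^2(B_1)\cap C(\mathbb{R}^d)$ throughout: after locating perturbed touching points $x_k\to x$ and constants $d_k\to 0$ with $\varphi_k:=\varphi+d_k$ touching $u^k$ at $x_k$, it simply writes
\[
\mathcal{I}^0\varphi(x)\leq |\mathcal{I}^0\varphi(x)-\mathcal{I}^{r_k}_k\varphi(x)|+|\mathcal{I}^{r_k}_k\varphi(x)-\mathcal{I}^{r_k}_k\varphi_k(x_k)|+\mathcal{I}^{r_k}_k\varphi_k(x_k).
\]
The first term vanishes by the weak convergence hypothesis applied \emph{directly} to $\varphi$ (which is already an admissible test function in the sense of Definition~\ref{def conv of oper}); the second by (A), (C), $\varphi\in C^2$, $d_k\to 0$ and $x_k\to x$; the third is $\leq\eta_k$. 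There is no hybrid test function and no need to re-enter the kernel decomposition of Lemma~\ref{lem:weak con}. Your Barles--Imbert style hybrid construction (replacing $\varphi$ by $u^k$ outside $B_\delta(y_0)$) is the natural choice when the nonlocal tails of the solution genuinely survive in the limit; here, because $\mathcal{I}^0$ is purely local and the weak convergence already encapsulates the vanishing of the rescaled nonlocal integral on any fixed $L^\infty\cap C^2_{\rm loc}$ function, the hybrid step is redundant. What your approach buys is a proof that requires only local $C^2$ testing and makes explicit why the uniform $L^\infty$ bound on $\{u^k\}$ matters; what the paper's approach buys is economy, invoking Lemma~\ref{lem:weak con} as a black box rather than re-deriving its $I_2$ and $I_3$ tail estimates.
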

\begin{proof}

Let $\varphi\in C^2(B_1)\cap C(\mathbb R^d)$ be a bounded text function such that $\varphi$ touches $u$ strictly from below at $x\in B_1$. Since $u^k$ $\Gamma$-converges to $u$ in $B_1$, we can find $x_k$ and $d_k$ such that $\varphi+d_k$ touches $u_k$ from below locally at $x_k$ where $x_k\to x, d_k\to 0$ as $k\to\infty $. Let $\varphi_k=\varphi+d_k$ and thus
\[\mathcal{I}^{r_k}_k\varphi^k(x_k)\leq \eta_k \text{ for $k$ large enough}.\] The goal is to show $\mathcal{I}^0\varphi(x)\leq 0$.

We have
\begin{align*}
    \mathcal{I}^0\varphi(x)&\leq |\mathcal{I}^0\varphi(x)-\mathcal{I}^{r_k}_k\varphi(x)|+|\mathcal{I}^{r_k}_k\varphi(x)-\mathcal{I}^{r_k}_k\varphi^k(x_k)|+\mathcal{I}^{r_k}_k\varphi^k(x_k)\\
    &=:I_1+I_2+I_3.
\end{align*}
By the weak convergence assumption of the operator $\mathcal{I}_k^{r_k}$, we have $I_1\to 0$ as $k\to \infty$. And $I_3\leq \eta_k\to 0$, as $k\to\infty$, is given by the assumption.

Now we show $I_2\to 0$. Recall the definition of $\mathcal{I}_k^{r_k}$ in \eqref{def Irkk}. It is not hard to see that
\[|\mathcal{I}^{r_k}_k\varphi^k(x_k)-\mathcal{I}^{r_k}_k\varphi(x_k)|\leq C|d_k|\]
which goes to $0$ as $k\to \infty$.
We only need to show
$|\mathcal{I}^{r_k}_k\varphi(x)-\mathcal{I}^{r_k}_k\varphi(x_k)|\to 0.$ This is guaranteed by the requirements (A)(C), $\varphi\in C^2(B_1)\cap C(\mathbb R^d)$ and $x_k\to x$.
We then conclude with $I_2\to 0$.

\end{proof}

\section{$C^{1,\alpha}$ Regularity}


The main theorem of this section is a $C^{1,\alpha}$ regularity for solutions of \eqref{main}. Before proving it, we first give the following Lemma.
\begin{lemma}\label{compare}
For any $x_0\in B_1$ and $r>0$, let $\mathcal{I}^r(x_0)$ and $\mathcal{I}^0(x_0)$ are given in \eqref{eqL:Ir} where the corresponding coefficients satisfy the requirements (A)(B)(C) with $\Omega=B_2$. Suppose that given $M,\epsilon>0$ and a modulus of continuity $\rho$, assume that there exist $r_0, \eta>0$ indepent of $x_0$ such that
\begin{align*}
    &r\leq r_0,\,\mathcal{I}^0(x_0)[x,v]=0 \text{ in }B_1,\\
    &\mathcal{I}^{r}(x_0)[x,u]\geq -\eta,\, \mathcal{I}^{r}(x_0)[x,u]\leq \eta \text{ in }B_1,\\
    &u=v \text{ in $\partial B_1$},\, |u(x)|+|v(x)|\leq M,\\
    &|u(x)-u(y)|+|v(x)-v(y)|\leq \rho(|x-y|) \text{ for all }x,y\in \overline{B_1}.
\end{align*}

Then
\[|u-v|\leq \epsilon \text{ in }B_1.\]
\end{lemma}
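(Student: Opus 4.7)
The plan is to argue by contradiction, using a compactness argument that combines the weak operator convergence of Lemma \ref{lem:weak con}, the stability statement of Lemma \ref{lem stability}, and the (classical) comparison principle for constant-coefficient Isaacs operators.

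Suppose the conclusion fails. Then there exist $\epsilon_0>0$ and sequences $r_k\to 0$, $\eta_k\to 0$, $x_k\in B_1$, and pairs $(u_k,v_k)$ satisfying all the hypotheses with $r=r_k$, $x_0=x_k$, $\eta=\eta_k$, but with $\sup_{B_1}|u_k-v_k|>\epsilon_0$. Since $|u_k|,|v_k|\le M$ and $u_k,v_k$ share the modulus of continuity $\rho$ on $\overline{B_1}$, the Arzelà--Ascoli theorem lets me pass to a subsequence along which $u_k\to u_\infty$ and $v_k\to v_\infty$ uniformly on $\overline{B_1}$, and $x_k\to z\in\overline{B_1}$. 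Extending $u_k,v_k$ to $\mathbb{R}^d$ so that $u_k=v_k$ outside $B_1$ (together they already agree on $\partial B_1$), uniform convergence on $\overline{B_1}$ together with the common bound $M$ gives $\Gamma$-convergence of both sequences in $B_1$, and also $u_\infty=v_\infty$ on $\partial B_1$.

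Next I apply Lemma \ref{lem:weak con} to deduce that $\mathcal{I}^{r_k}(x_k)$ converges weakly in $B_1$ to the constant-coefficient Isaacs operator $\mathcal{I}^0_z \varphi := \sup_{\alpha}\inf_{\beta}\{-\tr(a_{\alpha\beta}(z)\,D^2\varphi)\}$. The operators $\mathcal{I}^0(x_k)$ are of the same form but with $a_{\alpha\beta}$ evaluated at $x_k$; by the uniform continuity of the coefficients (assumption (A)) they too converge weakly to $\mathcal{I}^0_z$. Lemma \ref{lem stability}, applied to both sides of the inequalities $-\eta_k\le \mathcal{I}^{r_k}(x_k)[\cdot,u_k]\le \eta_k$ and to $\mathcal{I}^0(x_k)[\cdot,v_k]=0$, therefore shows that both $u_\infty$ and $v_\infty$ are viscosity solutions of
\[
\mathcal{I}^0_z w=0 \quad\text{in }B_1,
\]
with the same continuous boundary data $u_\infty|_{\partial B_1}=v_\infty|_{\partial B_1}$.

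The operator $\mathcal{I}^0_z$ is a (constant-coefficient) uniformly elliptic Isaacs operator with $\lambda I\le a_{\alpha\beta}(z)\le \Lambda I$, and it is proper in the sense of Crandall--Ishii--Lions, so the classical comparison principle (sandwiched between $\mathcal{P}^\pm(D^2\cdot)$) yields $u_\infty\equiv v_\infty$ in $\overline{B_1}$. Uniform convergence then contradicts $\sup_{B_1}|u_k-v_k|>\epsilon_0$, completing the argument. The main delicate point I expect in fleshing out the proof is verifying that the stability Lemma \ref{lem stability} really applies simultaneously to the perturbed sequence $\{u_k\}$ (where the operator changes with $k$) and to the varying-base-point sequence $\{v_k\}$; both cases reduce to checking that the hypothesis ``$\mathcal{I}^{r_k}_k\to \mathcal{I}^0$ weakly'' of Lemma \ref{lem stability} follows from Lemma \ref{lem:weak con}, once we note that $x_k\to z$ makes the weak limit independent of whether we start from the nonlocal operator $\mathcal{I}^{r_k}(x_k)$ or from its already-localized counterpart $\mathcal{I}^0(x_k)$. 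Everything else is a routine compactness argument.
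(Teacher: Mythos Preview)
Your proposal is correct and follows essentially the same approach as the paper: argue by contradiction, use Arzel\`a--Ascoli on the equicontinuous sequences $u_k,v_k$, invoke Lemma~\ref{lem:weak con} to get weak convergence of $\mathcal{I}^{r_k}(x_k)$ to the constant-coefficient Isaacs operator at the limit point, apply the stability Lemma~\ref{lem stability} to pass both equations to the limit, and conclude with the comparison principle. The paper's proof is terser (it does not spell out the separate treatment of $\mathcal{I}^0(x_k)\to\mathcal{I}^0_z$ for the $v_k$ sequence, which you correctly flag), but the argument is the same.
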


\begin{proof}
The proof is very much the same as Lemma 7 \cite{LL2}. 
If the statement is false, there exists \[M,\epsilon, x_k, r_k,\eta_k,u_k,v_k, x_0\] such that
\begin{align*}
&  x_k\in \overline{B_1}\,\,,\,\, x_0\in \overline {B_1}\,\,\text{and}\,\,x_k\to x_0,\\
    &  r_k\to 0\,\,\text{and}\,\, \eta_k\to 0,\\
    &\text{the assumptions of the lemma are valid for each }k,\\
    &\sup |u_k-v_k|\geq\epsilon \text{ in }B_1.
\end{align*}
By Lemma \ref{lem:weak con}, $\mathcal{I}^{r_k}(x_k)$ converges weakly to $\mathcal{I}^0(x_0)$ in $B_1$. 

Since $u_k,v_k$ share the same modulus of continuity in $\overline B_1$, by passing to a subsequence, we may assume that 
\[u_k\to u,\quad v_k\to v \text{ in } L^\infty(B_1)\]
where $u,v$ are bounded continuous function in $\overline{B_1}$. Also by the assumption that $u_k=v_k$ on $\partial B_1$, we have 
\[u=v=:g \text{ on }\partial B_1.\]

By Lemma \ref{lem stability}, $u,v$ solves 
\begin{equation*}
    \left\{\begin{aligned}
        & \mathcal{I}^0(x_0) [x,w]=0 \quad &\text{ in }&B_1,\\
        & w=g \quad &\text{ on }&\partial B_1.
    \end{aligned}\right.
\end{equation*}
By comparison principle, we derive that $u=v$ which contradicts 
\[\sup |u_k-v_k|\geq\epsilon \text{ in }B_1.\]
Then the proof follows.

\end{proof}

The following theorem shows the $C^{1,\alpha}$ estimates for \eqref{main}. The proof follows from the idea of Theorem 52\cite{LL2}. Comparing to the $C^{1,\alpha}$ regularity result in \cite{LL2}, the major differences are the following: 1. since our operator $\mathcal{I}$ is non-scale invariant, we have to use a uniform interior $C^\alpha$ estimate for all scaling operators $\{\mathcal{I}^{r}\}_r$; 2. with the weak integrability assumption \eqref{integ K} on K, the nonlocal operator $I_{\alpha\beta}$ is not well defined acting on any unbounded function, and thus we need to cut off our solutions and take care of the errors generated by the cut-off. 


\begin{theorem}\label{thm 3.1}
Let $\mathcal{I}$ given in \eqref{main} satisfy (A)(B)(C) with $\Omega=B_2$. Suppose $u\in L^\infty(\mathbb R^d)$ solves $\mathcal{I}[x,u]=0$ in $B_2$ in the viscosity sense. Then there exist constants $0<\alpha<1$ and $C>0$ such that  
\[\|u \|_{C^{1,\alpha}(B_{1})}\leq C\left(\|u\|_{L^\infty(\mathbb R^d)}+\sup_{\alpha\in\mathcal{A},\beta\in\mathcal{B}}\|f_{\alpha\beta}\|_{L^\infty(B_2)}\right),\]
where $C$ depends on $\lambda$, $\Lambda$, $\sup_{\alpha\in\mathcal{A},\beta\in\mathcal{B}}\|b_{\alpha\beta}\|_{L^\infty(B_2)}$, $\sup_{\alpha\in\mathcal{A},\beta\in\mathcal{B}}\|c_{\alpha\beta}\|_{L^\infty(B_2)}$, $K$ and $d$.
\end{theorem}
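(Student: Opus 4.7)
The plan is to follow the Caffarelli--Silvestre style iteration as in Theorem 52 of \cite{LL2}, feeding the approximation Lemma \ref{compare} at each dyadic scale and handling the two extra complications coming from the non-scale-invariance of $\mathcal{I}$ and from the weak integrability \eqref{integ K}, which forces me to work with bounded cut-offs. After dividing $u$ by a fixed constant I may assume $\|u\|_{L^\infty(\mathbb R^d)}\leq 1$ and $\sup_{\alpha,\beta}\|f_{\alpha\beta}\|_{L^\infty(B_2)}\leq \varepsilon_0$ with $\varepsilon_0>0$ a small universal constant to be chosen. It will be enough to produce a pointwise $C^{1,\alpha}$ expansion at each $x_0\in B_1$, uniformly in $x_0$; the norm bound then follows by a standard patching argument together with Lemma \ref{thm:holu1111}.

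Fix $x_0\in B_1$. I plan to build inductively, for some universal $\nu,\alpha\in(0,1)$ and $C>0$, a sequence of affine functions $l_k(x)=a_k+p_k\cdot(x-x_0)$ satisfying
\[
\sup_{B_{\nu^k}(x_0)}|u-l_k|\leq \nu^{k(1+\alpha)},\qquad |a_{k+1}-a_k|\leq C\nu^{k(1+\alpha)},\qquad |p_{k+1}-p_k|\leq C\nu^{k\alpha}.
\]
The increment bounds force $\{l_k\}$ to be Cauchy and yield the pointwise first-order Taylor expansion with the desired exponent. Given $l_k$, pick a cut-off $\xi\in C^\infty_c(\mathbb R^d)$ with $\xi\equiv 1$ on $B_2$, supported in $B_3$, and define
\[
w_k(x):=\frac{(u-\xi l_k)(\nu^k(x-x_0)+x_0)}{\nu^{k(1+\alpha)}},\qquad \tilde w_k:=\max\{-1,\min\{1,w_k\}\}.
\]
By the inductive bound $|w_k|\leq 1$ on $B_1$, while $\tilde w_k$ is globally bounded and hence an admissible argument for the rescaled nonlocal operator.

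A direct computation gives $\mathcal{I}^{\nu^k}(x_0)[x,w_k]=F_k(x)$ on $B_1$ with $\|F_k\|_{L^\infty(B_1)}\leq C\varepsilon_0$; the main contribution to $F_k$ is the rescaled $r^2f_{\alpha\beta}$ term together with the values of $\mathcal{I}$ on the affine $l_k$, the latter being controlled by a power of $\nu^k$ because only the drift and zero-order coefficients see $l_k$. Passing from $w_k$ to $\tilde w_k$ introduces tail errors of the form $I^{\nu^k}_{\alpha\beta}(x_0)[x,w_k-\tilde w_k]$; since $w_k-\tilde w_k$ vanishes on $B_1$ and grows at most linearly outside, the same splitting of $\mathbb R^d$ into $B_{\varepsilon/4}$, $B^c_{\varepsilon/4}\cap B_{\nu^{-k/2}}$ and $B^c_{\nu^{-k/2}}$ used in Lemma \ref{lem:weak con}, combined with the integrability \eqref{integ K} and the factor $r^{d+2}$ built into $N^r_{\alpha\beta}$, shows that each tail is $\omega(\nu^k)$ with $\omega\to 0$. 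Consequently $\tilde w_k$ is a viscosity sub/supersolution of the extremal inequalities of Lemma \ref{thm:holu1111} with right-hand side bounded by $C\varepsilon_0+\omega(\nu^k)$, so the family $\{\tilde w_k\}_k$ admits a uniform $C^\alpha$ estimate on $B_{3/4}$ and shares a common modulus of continuity; this is the compactness ingredient needed to invoke the approximation lemma.

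I then apply Lemma \ref{compare} to compare $\tilde w_k$ with the solution $v$ of $\mathcal{I}^0(x_0)[x,v]=0$ in $B_{3/4}$ with boundary data $\tilde w_k$ on $\partial B_{3/4}$. By Lemma \ref{lem:weak con} the limit $\mathcal{I}^0$ is uniformly elliptic Isaacs with constant coefficients, so $v\in C^{1,\alpha}(B_{1/2})$ by classical Caffarelli--Cabr\'e theory and admits an affine $\tilde l$ with $\sup_{B_\nu}|v-\tilde l|\leq C_0\nu^{1+\alpha}$. Choosing $\nu$ small enough so that $C_0\nu^{1+\alpha}\leq \tfrac12\nu^{1+\alpha}$, then $\varepsilon,\varepsilon_0$ so small that both the approximation error $\|\tilde w_k-v\|_{L^\infty}$ and the cut-off error are absorbed, and pulling $\tilde l$ back through the rescaling to define $l_{k+1}$, closes the induction. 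The principal obstacle throughout is the quantitative tail control of $w_k-\tilde w_k$: the family $\{w_k\}$ may grow like $\nu^{-k\alpha}|x|$ at infinity, so absorbing this growth into an $o(1)$ perturbation of the equation uses precisely the $\min\{|z|^2,1\}$-integrability of $K$ together with the $r^{d+2}$ scaling of $N^r_{\alpha\beta}$; without this coupling the approximation lemma would be inapplicable and the induction could not be closed.
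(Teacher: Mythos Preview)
Your overall architecture is the paper's, but the tail control of $w_k-\tilde w_k$ is where the proposal has a genuine gap. You assert that $w_k$ ``grows at most linearly outside'' and later that it ``may grow like $\nu^{-k\alpha}|x|$ at infinity,'' and that this is absorbed by \eqref{integ K} together with the $r^{d+2}$ scaling. Neither claim survives a computation. From your three inductive hypotheses alone, the only immediate global bound is $|w_k|\le C\nu^{-k(1+\alpha)}$; and even if one grants the linear growth $\nu^{-k\alpha}|z|$, the middle piece of the tail (with $r=\nu^k$) gives
\[
\nu^{-k\alpha}\int_{B_{1/2}^c\cap B_{r^{-1}}}|z|\,K^r(z)\,dz
=\nu^{-k\alpha}\,r\!\int_{B_{r/2}^c\cap B_1}|w|\,K(w)\,dw
\le 2\nu^{-k\alpha}\!\int_{B_1}|w|^2K(w)\,dw,
\]
which is $\nu^{-k\alpha}$ times a fixed positive constant and therefore blows up with $k$. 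So the induction cannot be closed on the hypotheses you carry.

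What the paper does---and what your scheme is missing---is a \emph{fourth} inductive hypothesis controlling $u-l_k$ \emph{outside} the small ball:
\[
|u-l_k|(x)\le \nu^{-k(\alpha'-\alpha)}\delta^{-1-\alpha'}|x|^{1+\alpha'}\quad\text{for }x\in B_{2\delta\nu^k}^c,
\]
with a fixed $\alpha'\in(\alpha,\beta)$. After rescaling this becomes the \emph{uniform} bound $|w_k(x)|\le |x|^{1+\alpha'}$ on $B_2^c$, and it is exactly this $(1+\alpha')$--growth with a $k$--independent constant that makes every piece of the tail integral $o(1)$. Propagating this hypothesis to step $k+1$ is nontrivial and uses $\alpha'>\alpha$ (the factor $\nu^{\alpha'-\alpha}$ supplies the needed gain). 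The paper also introduces a second small parameter $\delta$ (so the scale at step $k$ is $\delta\nu^k$) chosen \emph{after} $\nu$ and $\epsilon$; this decouples the constraint $r\le r_0$ in Lemma~\ref{compare} from the choice of $\nu$, and makes both the tail error $\omega_1(\delta)$ and the equation error $C\delta^2$ smaller than $\eta$ uniformly in $k$. Without $\delta$ your very first step ($k=0$, scale $1$) already fails the hypothesis $r\le r_0$ of the approximation lemma.
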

\begin{proof}
Fix any $x_0\in B_{1}$, we remind you that
\[ \mathcal{I}^r(x_0) [x,u]=r^2\mathcal{I}[x_0+r(x-x_0),u(x_0+r^{-1}(\cdot-x_0))]. \]
By Lemma \ref{lem:weak con}, as $r_k\to 0$ 
\[\mathcal{I}^{r_k}(x_0)\to \mathcal{I}^0(x_0).\]
In particular $\mathcal{I}^0(x_0)$ has interior $C^{1,\beta}$ estimates for some universal constant $\beta>0$.

The proofs and constants below will be independent of $x_0$ since our Lemma \ref{compare} is independent of $x_0$. For simplicity let us assume $x_0=0 $. Also without loss of generality, we can assume that
\begin{equation}\label{est 3.0}
\|u\|_{L^\infty(\mathbb{R}^d)}\leq 1.
\end{equation}
Using Lemma \ref{thm:holu1111}, we have $u\in C^\beta(B_1)$.

As done in Theorem 52 \cite{LL2}
we will show that there is a $\delta,\nu\in (0,\frac{1}{4})$ and a sequence of linear functions $l_k(x)=a_k+b_kx$ such that
\begin{align}
    &\sup_{B_{2\delta\nu^k}}|u-l_k|\leq \nu^{k(1+\alpha)},\label{cond1}\\
    &|a_{k}-a_{k-1}|\leq \nu^{(k-1)(1+\alpha)},\label{cond2}\\
    &\nu^{k-1}|b_{k}-b_{k-1}|\leq C\nu^{(k-1)(1+\alpha)}\label{cond3}\\
    \text{ and }&|u-l_k|\leq \nu^{-k(\alpha'-\alpha)}\delta^{-1-\alpha'}|x|^{1+\alpha'} \text{ for }x\in B^c_{2\delta\nu^k}.\label{cond4}
\end{align}
Here we took any $\alpha,\alpha'$ such that $0<\alpha<\alpha'<\beta$.
Once this is done, it is standard to obtain the $C^{1,\alpha}$ estimate of $u$ (at the origin) follows. 

When $k=0$, let $l_{-1}=l_0=0$. Since \eqref{est 3.0} holds, we have \eqref{cond1}-\eqref{cond4}. We proceed by induction. Assume \eqref{cond1}-\eqref{cond4} are satisfied up to some $k$ and we will show \eqref{cond1}-\eqref{cond4} for $k+1$.

Let $\xi:\mathbb{R}^d\to [0,1]$ such that $\xi$ is continuous and \begin{align*}
    \xi(x)=1 \text{ for }|x|\leq 3,\quad \xi(x)=0 \text{ for }|x|\geq 4.
\end{align*}
We define
\[w_k(x):=\frac{(u-\xi l_k)(\delta\nu^k x)}{\nu^{k(1+\alpha)}} \text{ and } w_k'(x):=\max\{\min\{w_k(x),1\},-1\}.\]
The reason we define $w_k'$ is that $w_k'$ is uniformly bounded independent of $k$.

By \eqref{cond1}, $|w_k'|=|w_k|\leq 1$ in $B_2$.
 Then in $B_2$
\begin{align*}
    \mathcal{I}_k w_k'(x)&:=\mathcal{I}^{\delta\nu^k}(0)[x,w_k']=\sup_\alpha\inf_\beta\{-\tr{ \, a_{\alpha\beta}(x) D^2 w_k(x)}-I^{\delta\nu^k}_{\alpha\beta}(0)[x,w_k']+ \\
    &\quad\quad \delta\nu^{k}b_{\alpha\beta}(x)\cdot Dw_k(x)+ \delta^2\nu^{2k}c_{\alpha\beta}(x)w_k(x)
+\delta^2\nu^{2k}f_{\alpha\beta}(x)\}.
\end{align*}
By the inductive requirements we have on $a_k,b_k$, they are uniformly bounded.
Since $\|u\|_\infty\leq 1$ and $ \xi l_k$ is uniformly bounded, $|w_k|\leq C\nu^{-k(1+\alpha)}$ in $\mathbb{R}^d$.
By \eqref{cond4}, for all $x\in B_2^c\cap B_{2\delta^{-1}\nu^{-k}}$, 
\begin{equation*}
|w_k(x)|\leq |x|^{1+\alpha'}.
\end{equation*}
And, for any $x\in B_{2\delta^{-1}\nu^{-k}}^c$,
\begin{equation*}
|w_k(x)|\leq C\nu^{-k(1+\alpha)}\leq C\delta^{-(1+\alpha')}\nu^{-(1+\alpha')k}\leq C|x|^{1+\alpha'}.
\end{equation*}
Also since $w'_k$ is bounded, we can assume that for $x\in B_2^c$
\begin{equation}
    \label{w' vs w}
    |w_k|+|w_k'-w_k|\leq C\min\{|x|^{1+\alpha'},\nu^{-k(1+\alpha)}\}.
\end{equation}
Thus, if we restrict $x$ in a smaller ball $B_{3/2}$ and denote by $s:=\delta\nu^k(\leq \delta)$
\begin{align*}
    &\quad\quad |I^{\delta\nu^k}_{\alpha\beta}(0)[x,w'_k-w_k]|\\
    &\leq \int_{\{z: |x+z|\geq 2\}\cap B_{s^{-1}}}|w'_k-w_k|(x+z) K^{s}(z)dz+\int_{B_{s^{-1}}^c}|w'_k-w_k|(x+z) K^{s}(z)dz\\
    &\leq C\left(\int_{B_{1/2}^c\cap B_{s^{-1}}}|x+z|^{1+\alpha'} K^{s}(z)dz+\int_{B_{s^{-1}}^c}\nu^{-k(1+\alpha)} K^{s}(z)dz \right)\\
    &\leq C\left(\int_{B_{1/2}^c\cap B_{s^{-1}}}|z|^{1+\alpha'} K^{s}(z)dz+\delta^2\nu^{k(1-\alpha)}\int_{B_{s^{-1}}^c}\delta^{-2}\nu^{-2k} K^{s}(z)dz\right)\\
    &\leq C\left(\int_{B_{1/2}^c\cap B_{s^{-1/2}}}|z|^{2} K^{s}(z)dz+s^{(1-\alpha')/2}\int_{B_{s^{-1/2}}^c\cap B_{s^{-1}}}|z|^{2} K^{s}(z)dz+\delta^2\nu^{k(1-\alpha)}\int_{B_{s^{-1}}^c}\delta^{-2}\nu^{-2k} K^{s}(z)dz\right)\\
    &\leq  C\left(\int_{B_{s/2}^c\cap B_{s^{1/2}}}|z|^{2} K(z)dz+s^{(1-\alpha')/2}+\delta^2\nu^{k(1-\alpha)}\right)\\
    &\leq C\left( \int_{B_{s^{1/2}}}|z|^{2} K(z)dz+s^{(1-\alpha')/2}+\delta^2\nu^{k(1-\alpha)}\right)\\
    &\leq C\left(\int_{B_{\delta^{1/2}}}|z|^{2} K(z)dz+\delta^{(1-\alpha')/2}+\delta^2\right)\\
    &=: \omega_1(\delta)
\end{align*}
where $\omega_1(\delta)\to 0$ as $\delta\to 0$.
Then for any $x \in B_{3/2}$, we have
\begin{align*}
     \mathcal{I}_k w_k(x)&=\delta^2\nu^{k(1-\alpha)} \sup_{\alpha\in\mathcal{A}}\inf_{\beta\in\beta}\{-\tr{ \, a_{\alpha\beta}D^2 u}-I_{\alpha\beta}[\cdot,u-\xi l_k]+b_{\alpha\beta}\cdot Du\\
     &\quad +c_{\alpha\beta}u
+f_{\alpha\beta}-b_{\alpha\beta}\cdot b_k -c_{\alpha\beta} l_k\}(\delta\nu^k x)\\
&=\delta^2\nu^{k(1-\alpha)} \sup_{\alpha\in\mathcal{A}}\inf_{\beta\in\mathcal{B}}\{-\tr{ \, a_{\alpha\beta}D^2 u}-I_{\alpha\beta}[\cdot,u]+b_{\alpha\beta}\cdot Du\\
     &\quad +c_{\alpha\beta}u
+f_{\alpha\beta}-b_{\alpha\beta}\cdot b_k -c_{\alpha\beta} l_k+I_{\alpha\beta}[\cdot,\xi l_k]\}(\delta\nu^k x).
\end{align*}

Now for any $0<\epsilon<1$ to be determined, let $r_0:=r_0(\epsilon),\eta:=\eta(\epsilon)$ be given in Lemma \ref{compare}.
Since $\mathcal{I}[x,u]=0$ in $B_2$ and
\[b_{\alpha\beta}\cdot b_k,c_{\alpha\beta}l_k, |{I}_{\alpha\beta}[\cdot,\xi l_k]|\]
are uniformly bounded in $B_{2\delta\nu^k}$, we have in $B_{3/2}$
\[|\mathcal{I}_kw_k(x)|\leq C\delta^2\nu^{k(1-\alpha)}\leq C\delta^2.\] Therefore in $B_{3/2}$
\begin{equation}\label{Ik w'}
    |\mathcal{I}_kw_k'(x)|\leq C\delta^2+\omega_1(\delta)<\eta 
\end{equation}
if $\delta:=\delta(\epsilon)$ is taken to be small. Also we can require
$\delta\leq r_0$.

Because $w_k'$ satisfies \eqref{Ik w'}, again using Lemma \ref{thm:holu1111}, we have $\|w_k'\|_{C^\beta(\overline{B_1})}\leq C$ for some $C$ independent of $k$. 
Then let us consider a function $h$ which solves
\begin{equation*}
    \left\{
    \begin{aligned}
        &\mathcal{I}^0(x_0) [x,h]=0 &\text{ in }B_1,\\
        & h=w_k' &\text{ on }\partial B_1.
    \end{aligned}
    \right.
\end{equation*}
By Lemma \ref{compare}, $|w_k'-h|\leq \epsilon$ in $B_1$. By $C^{1,\beta}$ estimates, we can take $\bar{l}=\bar{a}+\bar{b}x$ to be the linear part of $h$ at the origin. Since $|w_k'|\leq 1$ in $B_1$ and $0<\epsilon<1$, we have $|\bar{a}|\leq 1+\epsilon$. By the $C^{1,\beta}$ regularity for $\mathcal{I}^0(x_0)$, we also have $|\bar{b}|\leq C$ for some $C$ independent of $k$. 

Also we have for some constant $C_1>0$ independent of $k$
\[|h(x)-\bar{l}(x)|\leq C_1 |x|^{1+\beta}\quad \text{ in }B_{1/2}. \]
So
\begin{equation}\label{est 1/2}
    |w_k(x)-\bar{l}(x)|\leq \epsilon+C_1|x|^{1+\beta}\quad \text{ in }B_{1/2}.
\end{equation}
Using \eqref{w' vs w}, we have
\begin{align}
    &|w_k(x)-\bar{l}(x)|\leq 1+\bar a+\bar b\leq C &\text{in }&B_1,\label{est 1}\\
    &|w_k(x)-\xi(\delta\nu^k x)\bar{l}(x)|\leq |w_k(x)|+|\bar{l}(x)|\leq C|x|^{1+\alpha'}+C|x| &\text{ in }& B^c_1.\label{est inf}
\end{align}

We define

\begin{align*}
&l_{k+1}(x):=l_k(x)+\nu^{k(1+\alpha)}\bar{l}(\frac{x}{\delta\nu^k}),\\
&w_{k+1}(x):=
\frac{(u-\xi\, l_{k+1})(\delta\nu^{k+1} x)}{\nu^{(k+1)(1+\alpha)}}\\
 &(\text{ where }w_{k+1}(x)=\frac{(w_k-\bar{l})(\nu x)}{\nu^{1+\alpha}} \text{ if }x\leq \delta^{-1}\nu^{-k}).
\end{align*}
Then \eqref{cond2}\eqref{cond3} for $k+1$ follows.

Now let $\nu$ be sufficiently small such that $\max\{\nu^{\beta-\alpha}, C_1\nu^{\alpha'-\alpha}\}\leq 1/8$ and then we take $\epsilon\leq \nu^{1+\beta}$.
Using \eqref{est 1/2}\eqref{est 1}\eqref{est inf}, we have 
\begin{align}
    &|w_{k+1}(x)|\leq \nu^{\beta-\alpha}+C_1\nu^{\alpha'-\alpha}|x|^{1+\alpha'}\leq  \frac{1}{8}(1+|x|^{1+\alpha'}) \text{ in }B_{\nu^{-1}/2},\label{est con1}\\
    &|w_{k+1}(x)|\leq C\nu^{-(1+\alpha)} \text{ for }x\in B_{\nu^{-1}}\backslash B_{\nu^{-1}/2},\nonumber\\
    &|w_{k+1}(x)|\leq \frac{|w_k(\nu x)-\xi(\delta \nu^{k+1}x) \bar{l}(\nu x) |}{\nu^{(1+\alpha)}}\leq C|\nu|^{\alpha'-\alpha}|x|^{1+\alpha'}+C\nu^{-\alpha}|x| \text{ for }x\in\mathbb{R}^d\backslash B_{\nu^{-1}}.\nonumber
\end{align}
From the above estimates, we can further choose $\nu$ small enough such that 
\begin{equation}
\label{est B out}
    |w_{k+1}(x)|\leq \frac{1}{2}|x|^{1+\alpha'} \text{ for all }x\in B_2^c.
\end{equation}

Since 
\[w_{k+1}(x)=\frac{(u-\xi l_{k+1})(\delta\nu^{k+1} x)}{\nu^{(k+1)(1+\alpha)}}=\frac{(u- l_{k+1})(\delta\nu^{k+1} x)}{\nu^{(k+1)(1+\alpha)}}\]
in $B_2$, by \eqref{est con1} and $\nu<\frac{1}{4}$, {$|w_{k+1}(x)|\leq 1 \text{ in } B_2$} and thus \eqref{cond1} holds for $k+1$.

For $x\in B_{2\delta\nu^{k+1}}^c$, by \eqref{est B out} we have
\begin{align*}
    |(u-l_{k+1})(x)|&\leq \frac{1}{2} \nu^{(k+1)(1+\alpha)}|\delta^{-1}\nu^{-k-1}x|^{1+\alpha'}\\
    &\leq \nu^{-(k+1)(\alpha'-\alpha)}\delta^{-1-\alpha'}|x|^{1+\alpha'}.
\end{align*}
So \eqref{cond4} holds for $k+1$. This completes the inductive step and the proof.
\end{proof}

\begin{corollary}\label{col:C1alpha}
Let $\mathcal{I}$ given in \eqref{main} satisfy (A)(B)(C). Suppose $u\in L^\infty(\mathbb R^d)$ solves $\mathcal{I}[x,u]=0$ in $\Omega$ in the viscosity sense. Then there exist constants $0<\alpha<1$ and $C>0$ such that  
\[\|u \|_{C^{1,\alpha}(\Omega_\delta)}\leq C\left(\|u\|_{L^\infty(\mathbb R^d)}+\sup_{\alpha\in\mathcal{A},\beta\in\mathcal{B}}\|f_{\alpha\beta}\|_{L^\infty(\Omega)}\right),\]
where $C$ depends on $\delta$, $\lambda$, $\Lambda$, $\sup_{\alpha\in\mathcal{A},\beta\in\mathcal{B}}\|b_{\alpha\beta}\|_{L^\infty(\Omega)}$, $\sup_{\alpha\in\mathcal{A},\beta\in\mathcal{B}}\|c_{\alpha\beta}\|_{L^\infty(\Omega)}$, $K$ and $d$.

\end{corollary}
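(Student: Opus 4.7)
The plan is to reduce the general interior estimate to the model case $\Omega=B_2$ already handled in Theorem \ref{thm 3.1} by a rescaling and covering argument. Fix an arbitrary point $x_0\in\Omega_\delta$, so that $B_\delta(x_0)\subset\Omega$. Setting $r:=\delta/2$ and $v(y):=u(x_0+ry)$ for $y\in\mathbb R^d$, a direct calculation based on the definition \eqref{eqL:Ir} shows that $v$ is a viscosity solution of
\[
\mathcal{I}^r(x_0)[y,v]=0 \quad\text{in } B_2,
\]
whose rescaled coefficients are $\tilde a_{\alpha\beta}(y)=a_{\alpha\beta}(x_0+ry)$, $\tilde b_{\alpha\beta}(y)=rb_{\alpha\beta}(x_0+ry)$, $\tilde c_{\alpha\beta}(y)=r^2c_{\alpha\beta}(x_0+ry)$, $\tilde f_{\alpha\beta}(y)=r^2f_{\alpha\beta}(x_0+ry)$, and whose rescaled kernel is dominated by $K^r(z)=r^{d+2}K(rz)$.

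Next I would check that the rescaled operator still satisfies hypotheses (A)(B)(C) on $B_2$ with constants controlled solely by the original data and $\delta$. The ellipticity constants $\lambda,\Lambda$ are unchanged, while the drift and zero-order bounds pick up only the harmless factors $r,r^2\leq 1$. The uniform continuity in $y$ of the coefficients and of $\tilde N_{\alpha\beta}(\cdot,z)$ for each fixed $z\neq 0$ is inherited from the originals. The one place where the non-scale-invariance of $\mathcal{I}$ intervenes is the kernel bound \eqref{integ K}, and it is handled by the change of variable $w=rz$:
\[
\int_{\mathbb R^d}\min\{|z|^2,1\}K^r(z)\,dz
=\int_{\mathbb R^d}\min\{|w|^2,r^2\}K(w)\,dw
\leq \int_{\mathbb R^d}\min\{|w|^2,1\}K(w)\,dw,
\]
uniformly for $r\leq 1$. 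This is what allows the universal constants in Theorem \ref{thm 3.1} to be chosen depending on $K^r$ only through quantities already controlled by $K$.

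Having verified the hypotheses, Theorem \ref{thm 3.1} applied to $v$ on $B_2$ yields constants $\alpha\in(0,1)$ and $C>0$ such that
\[
\|v\|_{C^{1,\alpha}(B_1)}\leq C\bigl(\|v\|_{L^\infty(\mathbb R^d)}+\sup_{\alpha,\beta}\|\tilde f_{\alpha\beta}\|_{L^\infty(B_2)}\bigr).
\]
Undoing the rescaling via $Du(x_0+ry)=r^{-1}Dv(y)$ and $[Du]_\alpha=r^{-1-\alpha}[Dv]_\alpha$, and using $\|v\|_{L^\infty}=\|u\|_{L^\infty}$ together with $\|\tilde f_{\alpha\beta}\|_\infty\leq r^2\|f_{\alpha\beta}\|_\infty$, produces
\[
\|u\|_{C^{1,\alpha}(B_{r}(x_0))}\leq C\,\delta^{-(1+\alpha)}\bigl(\|u\|_{L^\infty(\mathbb R^d)}+\sup_{\alpha,\beta}\|f_{\alpha\beta}\|_{L^\infty(\Omega)}\bigr).
\]
Since $x_0\in\Omega_\delta$ was arbitrary and $\overline{\Omega_\delta}$ is compact, a finite covering of $\Omega_\delta$ by such balls $B_{r/2}(x_j)$ and a standard patching of the pointwise first-derivative and H\"older-seminorm bounds completes the proof. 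I expect the only genuine technical point to be the uniform-in-$r$ control of the constants produced by Theorem \ref{thm 3.1}, which is precisely the kernel bound recorded above combined with the fact that the constants of Lemma \ref{thm:holu1111} were already proved independent of $r$.
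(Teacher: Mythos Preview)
Your approach is correct and is exactly the standard scaling-and-covering argument one expects; the paper states the corollary without proof, treating it as an immediate consequence of Theorem~\ref{thm 3.1}. One small point worth making explicit: the rescaled nonlocal term $I^r_{\alpha\beta}$ in \eqref{eq:rescale nonlocal operator} carries the cutoff $\chi_{B_{1/r}}$ rather than $\chi_{B_1}$, so $\mathcal{I}^r(x_0)$ is not literally of the form \eqref{main}. You can either absorb the discrepancy into the drift---the extra term is $Dv\cdot\int_{B_{1/r}\setminus B_1} z\,N^r_{\alpha\beta}(y,z)\,dz$, whose coefficient is bounded by $\int_{B_1\setminus B_r} r|w|K(w)\,dw\le\int_{B_1}|w|^2K(w)\,dw$---or, more directly, note that the entire proof of Theorem~\ref{thm 3.1} is already carried out for the rescaled operators $\mathcal{I}^{\delta\nu^k}$ (via Lemmas~\ref{thm:holu1111}, \ref{lem:weak con}, \ref{compare}, all uniform in the scaling parameter), so it applies to $\mathcal{I}^r$ with one further rescaling without any rewriting.
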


\section{$C^{2,\alpha}$ Regularity}

In this section we will show that the following Dirichlet problem
\begin{equation}\label{convex operator}
     \left\{\begin{aligned}
  &\tilde{\mathcal{I}}[x,u]:=\sup_{\alpha\in\mathcal{A}}\{-\tr{ \, a_{\alpha}(x) D^2 u(x)}- \tilde{I}_{\alpha}[x,u]+b_{\alpha}(x)\cdot Du(x)+c_{\alpha}(x)u(x)+f_{\alpha}(x)\}=0,\,\,\text{in $\Omega$},\\
&u=g,\,\, \text{ in }\Omega^{c},
     \end{aligned}\right.
\end{equation}
with a bounded function $g\in C(\mathbb R^d)$ and
\begin{equation}
    \tilde{I}_{\alpha}[x,u]=\int_{\mathbb{R}^d}( u(x+z)-u(x)-\chi_{B_1}(z)Du(x)\cdot z)N_{\alpha}(x,z)dz,
\end{equation}
admits a unique viscosity solution $u$, which is $C_{\rm loc}^{2,\alpha}(\Omega)$.

To study \eqref{convex operator}, we first consider
\begin{equation}\label{modified operator}
    \left\{\begin{aligned}&
\tilde{\mathcal{I}}_q [x,u_q]=0 &\text{ in }&\Omega,\\
&u=g &\text{ in }&\Omega^{c},
\end{aligned}\right.
\end{equation}
where $\tilde{\mathcal{I}}_q$ is defined by replacing $\tilde{I}_\alpha$ by the truncated nonlocal operator
 \[ \tilde{I}_{\alpha,q}[x,u]:=\int_{\mathbb{R}^d}( u(x+z)-u(x)-\chi_{B_1}(z)Du(x)\cdot z)N_{\alpha}(x,z)\chi_{|z|\geq 1/q}\,dz\]
 in \eqref{convex operator}. The idea is to use the $C^{1,\alpha}$ estimates obtained in the previous section to derive the uniform $C^{2,\alpha}$ estimates for $u_q$, which allows us to pass the limit up a subsequence and obtain a classical solution $u$ of \eqref{convex operator}. Comparing viscosity and classical solutions, we know that $u$ is the unique viscosity solution of \eqref{convex operator}. We first start with the following a priori estimates.
 
 \begin{theorem}\label{a priori estimate}
 Suppose (A')(B)(C') hold and $\alpha$ is a sufficiently small universal constant. If $u\in C^2(\Omega)\cap C^{\alpha}(\mathbb R^d)$ solves $\tilde{\mathcal{I}}[x,u]=0$ in $\Omega$ classically, then
 \begin{equation*}
 \|u\|_{C^{2,\alpha}(\Omega_\delta))}\leq C\left(\|u\|_{C^{\alpha}(\mathbb R^d)}+\sup_{\alpha\in\mathcal{A}}\|f_{\alpha}\|_{C^\alpha(\Omega)}\right)
 \end{equation*}
 where $C$ depends on $\delta$, $\lambda$, $\Lambda$, $\sup_{\alpha\in\mathcal{A}}\|b_\alpha\|_{C^\alpha(\Omega)}$, $\sup_{\alpha\in\mathcal{A}}\|c_\alpha\|_{C^\alpha(\Omega)}$, $K$ and $d$.
 \end{theorem}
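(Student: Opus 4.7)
The strategy is to absorb the nonlocal operator into the right-hand side, reducing \eqref{convex operator} to a convex uniformly elliptic second-order Bellman PDE on $\Omega_\delta$ with H\"older data, and then to invoke the classical Evans--Krylov $C^{2,\alpha}$ theorem. Setting
\[
\tilde f_\alpha(x) := f_\alpha(x) - \tilde I_\alpha[x,u],
\]
the equation becomes
\[
\sup_{\alpha\in\mathcal A}\bigl\{-\tr(a_\alpha(x)D^2u)+b_\alpha(x)\cdot Du+c_\alpha(x)u+\tilde f_\alpha(x)\bigr\}=0 \quad\text{in }\Omega,
\]
and the a priori bound will follow once we control $[\tilde f_\alpha]_{C^\alpha(\Omega_\delta)}$ uniformly in $\alpha$ by the right-hand side of the target estimate.

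I would first apply Corollary \ref{col:C1alpha} on a slightly larger subdomain $\Omega_{\delta/2}$ to obtain
\[
\|u\|_{C^{1,\alpha}(\Omega_{\delta/2})}\le C\bigl(\|u\|_{C^\alpha(\mathbb R^d)}+\sup_\alpha\|f_\alpha\|_{L^\infty(\Omega)}\bigr).
\]
Writing $F_x(z):=u(x+z)-u(x)-\chi_{B_1}(z)Du(x)\cdot z$, I would then estimate $|\tilde I_\alpha[x,u]-\tilde I_\alpha[y,u]|$ for $x,y\in\Omega_\delta$ by splitting the $z$-integration at a small cutoff $|z|=r$. On the far region $\{|z|\ge r\}$, $F_x$ and $F_y$ are H\"older-in-$x$ with norm controlled by $\|u\|_{C^{1,\alpha}(\Omega_{\delta/2})}$, and the $x$-dependence of the kernel $N_\alpha$ is handled via the uniform continuity guaranteed by (C) on $\{|z|\ge r\}$ combined with the quantitative $\alpha$-H\"older bound on the total weighted mass $\int\min\{1,|z|^2\}N_\alpha(\cdot,z)\,dz$ from (C'); this produces a contribution bounded by $C_r\|u\|_{C^{1,\alpha}(\Omega_{\delta/2})}|x-y|^\alpha$. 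On the near-diagonal region $\{|z|<r\}$, the classical-solution hypothesis $u\in C^2(\Omega)$ gives the pointwise bound $|F_x(z)|\le\tfrac12\|D^2u\|_{L^\infty(\Omega_\delta)}|z|^2$, and likewise for $|F_x(z)-F_y(z)|$; using \eqref{integ K} the resulting contribution has the form $Cr^\theta\|D^2u\|_{L^\infty(\Omega_\delta)}$ for some $\theta>0$.

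The main obstacle is the a priori unknown factor $\|D^2u\|_{L^\infty(\Omega_\delta)}$ appearing in the near-diagonal contribution, together with the fact that (C') only controls a scalar weighted mass rather than the kernel $N_\alpha(\cdot,z)$ in a strong enough dual sense for the far-region part. Both difficulties are resolved by choosing $r$ small and then absorbing: once a H\"older bound of the form
\[
[\tilde f_\alpha]_{C^\alpha(\Omega_\delta)}\le C_1+C_2\,r^\theta\|D^2u\|_{L^\infty(\Omega_\delta)}
\]
is obtained, the Evans--Krylov theorem applied to the frozen local convex equation above yields
\[
\|u\|_{C^{2,\alpha}(\Omega_\delta)}\le C_3\bigl(\|u\|_{C^\alpha(\Omega_\delta)}+\sup_\alpha\|\tilde f_\alpha\|_{C^\alpha(\Omega_\delta)}+\sup_\alpha\|f_\alpha\|_{C^\alpha(\Omega)}\bigr).
\]
Combining this with the standard interpolation inequality $\|D^2u\|_{L^\infty(\Omega_\delta)}\le\epsilon\|u\|_{C^{2,\alpha}(\Omega_\delta)}+C_\epsilon\|u\|_{L^\infty(\Omega_\delta)}$ and choosing $r$ and $\epsilon$ sufficiently small, the $\|D^2u\|_{L^\infty}$ term is absorbed into the left-hand side, yielding the estimate stated in the theorem. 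The technical heart of the proof is thus quantifying the perturbative nature of the nonlocal term against the second-order operator in the H\"older scale, using precisely the combination of (C) and (C').
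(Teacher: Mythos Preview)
Your overall strategy matches the paper's: move $\tilde I_\alpha[x,u]$ to the right-hand side, use the $C^{1,\alpha}$ estimate from Corollary~\ref{col:C1alpha} as input, split the nonlocal integral at a small scale, apply Evans--Krylov to the resulting convex local Bellman equation, and absorb the small-scale piece. However, your near-diagonal bound is wrong as stated. On $\{|z|<r\}$ you claim a contribution of size $Cr^\theta\|D^2u\|_{L^\infty(\Omega_\delta)}$ to the $C^\alpha$ seminorm of $\tilde f_\alpha$, but an $L^\infty$ bound on $D^2u$ produces no $|x-y|^\alpha$ factor. Writing $F_x(z)=\int_0^1(1-t)\,D^2u(x+tz)[z,z]\,dt$, one has $|F_x(z)-F_y(z)|\le [D^2u]_{C^\alpha}\,|x-y|^\alpha|z|^2$, so the correct near-diagonal estimate is
\[
[I_1]_{C^\alpha}\le c_r\,\|u\|_{C^{2,\alpha}},\qquad c_r:=C\int_{|z|<r}|z|^2K(z)\,dz\ \longrightarrow\ 0,
\]
with no polynomial rate $r^\theta$ guaranteed by \eqref{integ K} alone. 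With this correction the absorption is immediate (choose $r$ so that $c_r$ times the Evans--Krylov constant is below $\tfrac12$), and your interpolation step becomes superfluous. This is exactly how the paper closes the estimate.

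The paper also differs from your sketch by first multiplying $u$ by a cutoff $\psi\in C_c^\infty(\Omega)$ with $\psi\equiv1$ on $\Omega_{3\tilde\delta}$ and running the whole argument for $\psi u$. This is not merely cosmetic: it makes $\psi u\in C^{1,\alpha}(\mathbb R^d)$ globally, so the far-region integrand is $C^\alpha$ in $x$ without tracking whether $x+z$ lies in $\Omega_{\delta/2}$, and it guarantees that the norm to be absorbed, $\|\psi u\|_{C^{2,\alpha}(\Omega_{2\tilde\delta})}$, coincides exactly with the left-hand side of the Evans--Krylov estimate. In your direct approach the near-diagonal error naturally involves $\|u\|_{C^{2,\alpha}}$ on the slightly larger set $\{x:\,d(x,\Omega^c)>\delta-r\}$, creating a domain mismatch that would require an additional covering argument to close.
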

 \begin{proof}
 Let $\psi\in C_c^\infty(\Omega)$ and $0\leq\psi\leq 1$. For some $1>\tilde\delta>0$, assume 
\[ supp\{\psi\}\subset\Omega_{2\tilde\delta}, \, \Omega_{3\tilde\delta}\subset\{\psi=1\} .\]
Then $\psi u$ is a classical solution of
\begin{equation}\label{eqn elli uq}
    \sup_\alpha\{-\tr{ \, a_{\alpha}(x) D^2 (\psi u)(x)}+b_{\alpha}(x)\cdot D(\psi u)(x)+c_{\alpha}(x)(\psi u)(x)+\tilde{f}_{\alpha}(x)\}=0
\end{equation}
where
\begin{align*}
    \tilde{f}_\alpha(x):=&  u(x)\tr\, a_\alpha(x) D^2 \psi(x)+\tr\, a_\alpha(x) D\psi(x)\otimes Du(x)-u(x) b_\alpha(x)\cdot D\psi(x)+f_\alpha(x) \psi(x)-\tilde{I}_{\alpha}[x,\psi u]\\
    &+\int_{\mathbb{R}^d}( \psi(x+z)-\psi(x)-\chi_{B_1}(z)D\psi(x)\cdot z)u(x+z)N_{\alpha}(x,z)\,dz\\
    &+\int_{\mathbb{R}^d}( u(x+z)-u(x))D\psi(x)\cdot z N_{\alpha}(x,z)\chi_{B_1}(z)\,dz.
\end{align*}
Applying Corollary \ref{col:C1alpha}, we obtain $\|u\|_{C^{1,\alpha}(\Omega_{\tilde\delta})}\leq C\left(\|u\|_{L^\infty(\mathbb R^d)}+\sup_{\alpha\in\mathcal{A}}\|f_\alpha\|_{L^\infty(\Omega)}\right)$.
Then, using $u\in C_{\rm loc}^{1,\alpha}(\Omega)$, (A'), (C') and $\psi\in C_c^{\infty}(\Omega)$, we know
\[b_{\alpha}(x)\cdot D(\psi u)(x),\,c_{\alpha}(x)(\psi u)(x),\,u(x)\tr\, a_\alpha(x) D^2 \psi(x),\,\]\[\tr\, a_\alpha(x) D\psi(x)\otimes Du(x),\,u(x) b_\alpha(x)\cdot D\psi(x),\,f_\alpha(x) \psi(x)\]
are uniformly $C^\alpha$ in $\Omega$. Then, by $\psi\in C_c^\infty(\Omega)$, $u\in C^{\alpha}(\mathbb R^d)$ and (C'), we have
\begin{align*}
&\int_{\mathbb{R}^d}( \psi(x+z)-\psi(x)-\chi_{B_1}(z)D\psi(x)\cdot z)u(x+z)N_{\alpha}(x,z)\,dz\\
=&\int_{B_1}2\Sigma_{|\gamma|=2}\frac{z^\gamma}{\gamma !}\left(\int_0^1 (1-t)\partial^\gamma \psi(x+tz)dt\right)u(x+z)N_{\alpha}(x,z)\,dz\\
&+\int_{B_1^c}( \psi(x+z)-\psi(x))u(x+z)N_{\alpha}(x,z)\,dz
\end{align*}
is uniformly $C^\alpha$ in $\Omega$.
Also 
\begin{align*}
   & \int_{\mathbb{R}^d}( u(x+z)-u(x))D\psi(x)\cdot z N_{\alpha}(x,z)\chi_{B_1}(z)\,dz\\
   =&\int_{B_{\tilde\delta}}\Sigma_{|\gamma|=1}z^\gamma\left(\int_0^1 (1-t)\partial^\gamma u(x+tz) dt\right)\, D\psi(x)\cdot zN_\alpha(x,z)dz\\
   &+\int_{B_1\setminus B_{\tilde\delta}^c}( u(x+z)-u(x))D\psi(x)\cdot z N_{\alpha}(x,z)dz
\end{align*}
is uniformly $C^\alpha$ in $\Omega$ since $u\in C_{\rm loc}^{1,\alpha}(\Omega)\cap C^\alpha(\mathbb R^d), \psi\in C_c^{\infty}(\Omega)$ and (C') holds.

As for $\tilde{I}_\alpha[x,\psi u]$, we decompose it into two terms, i.e. for $0<\epsilon<\tilde\delta$
\begin{align*}
    \tilde{I}_{\alpha,q}[x,\psi u] =& \int_{B_\epsilon}( (\psi u)(x+z)-(\psi u)(x)-D(\psi u)(x)\cdot z)N_{\alpha}(x,z)\,dz\\
    &+ \int_{B_\epsilon^c}( (\psi u)(x+z)-(\psi u)(x)-\chi_{B_1}(z)D(\psi u)(x)\cdot z)N_{\alpha}(x,z)\,dz=:I_1(x)+I_2(x).
\end{align*}
The second part $I_2$ is uniformly $C^\alpha$ in $\Omega$ since $\psi u\in C^{1,\alpha}(\mathbb R^d)$ and (C') holds. The first part
\[I_1(x)=\int_{B_\epsilon}2\Sigma_{|\gamma|=2}\frac{z^\gamma}{\gamma !}\left(\int_0^1 (1-t)\partial^\gamma (\psi u)(x+tz)dt\right)N_\alpha(x,z) \,dz.\]
We find for $x,y\in\Omega_{\tilde\delta}$
\begin{align*}
    |I_1(x)-I_1(y)|&\leq \int_{B_\epsilon}2\Sigma_{|\gamma|=2}\frac{z^\gamma}{\gamma !}\left(\int_0^1 (1-t)\partial^\gamma \left|(\psi u)(x+tz)-(\psi u)(y+tz)\right|dt\right)K(z) \,dz\\
    &\leq C \int_{B_\epsilon}|z|^2\int_0^1 \left\|(\psi u)\right\|_{C^{2,\alpha}(\Omega_{\tilde\delta})}|x-y|^\alpha dtK(z) \,dz\\
    &\leq c_\epsilon \left\|(\psi u)\right\|_{C^{2,\alpha}(\Omega)}|x-y|^\alpha\\
    &= c_\epsilon \left\|(\psi u)\right\|_{C^{2,\alpha}(\Omega_{2\tilde\delta})}|x-y|^\alpha 
\end{align*}
where $c_\epsilon$ is a universal constant depending only on $\epsilon$ and it converges to $0$ as $\epsilon\to 0$.

We proved that 
\[\|\tilde{f}_\alpha\|_{C^{\alpha}(\Omega_{\tilde\delta})}\leq C(\tilde\delta,\epsilon)\left(\|u\|_{C^\alpha(\mathbb R^d)}+\|f\|_{C^\alpha(\Omega)}\right)+c_\epsilon\|(\psi u)\|_{C^{2,\alpha}(\Omega_{2\tilde\delta})}.\]
Using the interior $C^{2,\alpha}$ estimates for local elliptic equations, we have
\[\|\psi u\|_{C^{2,\alpha}(\Omega_{2\tilde\delta})}\leq C(\tilde\delta,\epsilon)\left(\|u\|_{C^\alpha(\mathbb R^d)}+\|f\|_{C^\alpha(\Omega)}\right)+c(\tilde\delta,\epsilon)\|(\psi u)\|_{C^{2,\alpha}(\Omega_{2\tilde\delta})}\]
where $c(\tilde\delta,\epsilon)$ is a constant such that $c(\tilde\delta,\epsilon)\to 0$ as $\epsilon\to 0$ for each $\tilde\delta>0$.
If select $c(\tilde\delta,\epsilon)\leq 1/2$ and $\delta:=3\tilde\delta$, we obtain the desired result. \end{proof}

\begin{lemma}\label{lem:bdry estimate}
Suppose $g\in C^\kappa(\Omega^c)$, (A')(B)(C') hold and $\Omega$ satisfies the uniform exterior ball condition. 
Then, if $u$ solves \eqref{convex operator} in the viscosity sense, we have $u\in C^\alpha(\mathbb R^d)$ for some $\alpha>0$.
\end{lemma}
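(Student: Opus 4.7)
The argument splits into an interior piece, which is already in hand, and a boundary piece, for which I would construct an exterior-ball barrier.

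For the interior, Lemma~\ref{thm:holu1111} already yields $u \in C^\alpha_{\mathrm{loc}}(\Omega)$: by the convexity of $\tilde{\mathcal I}$, the viscosity solution $u$ is a sub- and super-solution of the extremal inequalities in that lemma, and by the usual $L^\infty$ bound (comparing $u$ with $\pm(\|g\|_\infty + C_0)$ for some large constant controlled by $\sup_\alpha\|f_\alpha\|_\infty$) the function $u$ is bounded on $\mathbb R^d$. Hence the only remaining task is to produce, for each $x_0\in\partial\Omega$, a pointwise Hölder estimate $|u(x)-g(x_0)|\le C|x-x_0|^\alpha$ valid on $\mathbb R^d$.

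Fix $x_0\in\partial\Omega$ and let $B_{r_*}(y_0)\subset\Omega^c$ with $\{x_0\}=\partial B_{r_*}(y_0)\cap\partial\Omega$. I would build a barrier of the form
\[
\Phi(x) \;=\; A\,|x-x_0|^{\alpha} \;+\; B\,\psi(x),
\]
where $\psi$ is a truncated local-elliptic exterior-ball barrier such as
\[
\psi(x)\;=\;\min\!\Bigl(\bigl(|x-y_0|/r_*\bigr)^{N}-1,\;1\Bigr)\,\chi_{\{|x-y_0|\ge r_*\}},
\]
with $N$ chosen large enough that $\mathcal P^-(D^2\psi)\le -1$ in a small neighborhood $U$ of $x_0$ inside $\Omega$. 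Since $\psi$ is globally bounded, the nonlocal contribution $\mathcal P^+_{K}(\psi)$ is bounded by a universal constant via \eqref{integ K}; combined with the lower order terms controlled by (A'), taking $A$ and then $B$ large one arranges that $\Phi$ is a classical (hence viscosity) supersolution of $\tilde{\mathcal I}$ in $U$. On $\partial U\cap\Omega$ we have $\Phi\ge 2\|u\|_\infty\ge |u-g(x_0)|$, and on $\Omega^c$ the term $A|x-x_0|^\alpha$ with $\alpha\le\kappa$ together with $B\psi$ dominates $|g(x)-g(x_0)|$ by the $C^\kappa$ hypothesis on $g$. The comparison principle for \eqref{convex operator} (available under (A')(B)(C')) then gives $u(x)-g(x_0)\le\Phi(x)$ in $U$, and the symmetric construction handles the lower bound, so $|u(x)-g(x_0)|\le C|x-x_0|^\alpha$ in a neighborhood of $x_0$.

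The main obstacle is precisely the barrier construction. Because \eqref{integ K} imposes only the weak integrability $\int\min(|z|^2,1)K(z)\,dz<\infty$, the operator $\tilde{\mathcal I}$ has no intrinsic order or scaling, so the scale-invariant power-type barriers used in purely nonlocal regularity theory (as in \cite{Se,TJ1,TJ}) are unavailable. The supersolution property must instead be driven by the uniformly elliptic local term, and the nonlocal integral must be controlled as a bounded perturbation; this is the reason for the truncation built into $\psi$, which keeps $\psi$ bounded at infinity (so that $\mathcal P^+_K\psi$ is finite) while allowing $D^2\psi$ to be sufficiently negative near $\partial B_{r_*}(y_0)$. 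Once this pointwise boundary estimate is established, patching with the interior $C^\alpha$ bound by the standard dichotomy on $|x-y|$ versus $\mathrm{dist}(x,\partial\Omega)$, together with $g\in C^\kappa(\Omega^c)$, yields $u\in C^\alpha(\mathbb R^d)$ with $\alpha$ a possibly smaller universal exponent.
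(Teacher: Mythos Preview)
Your overall plan—barrier at the boundary, interior H\"older, then patch via the dichotomy on $|x-y|$ versus $\mathrm{dist}(x,\partial\Omega)$—mirrors the paper's (which cites \cite{mou2017} for the boundary step and then follows \cite{LL2} for the gluing), but the patching step hides the main difficulty and your sketch does not address it. Lemma~\ref{thm:holu1111} requires $-\tfrac12\le u\le\tfrac12$ on \emph{all of} $\mathbb R^d$, not just on $B_1$. Rescaling around $x_0\in\Omega$ by $r\sim\mathrm{dist}(x_0,\partial\Omega)$ and normalizing by $\|u\|_{L^\infty(\mathbb R^d)}$ therefore gives only $|u(y)-u(x_0)|\le C\|u\|_\infty\,(|y-x_0|/r)^\beta$, whose H\"older seminorm blows up like $r^{-\beta}$; inserted into the dichotomy this yields nothing beyond the trivial $L^\infty$ bound when $|y-x_0|<r/2$. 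For local equations the dichotomy works because Krylov--Safonov depends only on $\mathrm{osc}_{B_r}u$; here the nonlocal tail forces the global norm into the estimate. The paper's fix is to normalize the rescaled function $v(x)=u(rx)-u(x_1)$ by the boundary-controlled oscillation $\rho(4r)\sim r^\gamma$, then \emph{truncate} it at level $\pm\rho(4r)$ so that it is globally bounded, and check directly from \eqref{integ K} that the truncation error in the rescaled nonlocal term is $O(r^\gamma)$. Lemma~\ref{thm:holu1111} applied to the truncated function then gives $|u(y)-u(x_0)|\le Cr^\gamma(|y-x_0|/r)^\beta$, and one concludes with $\alpha=\min(\beta,\gamma)$. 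This interior truncation—parallel to the one you correctly built into $\psi$—is the core of the lemma and is absent from your proposal.

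There is also a problem with the barrier as written: for $0<\alpha<1$ and $d\ge 2$ one has $\mathcal P^+(D^2|x-x_0|^\alpha)=\alpha\bigl(\Lambda(d-1)-\lambda(1-\alpha)\bigr)|x-x_0|^{\alpha-2}\to+\infty$ as $\Omega\ni x\to x_0$, and no bounded multiple of $\mathcal P^-(D^2\psi)$ can absorb it, so $\Phi=A|x-x_0|^\alpha+B\psi$ is not a classical supersolution of $\tilde{\mathcal I}$ near $x_0$ for any choice of $A,B$.
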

\begin{proof}
From the proof of Theorem 5.6 \cite{mou2017}, there exist $C_2,\gamma>0$ such that, for any $x\in\partial\Omega$ and $y\in\mathbb{R}^d$, we have
\[|u(x)-u(y)|\leq C_2|x-y|^{\gamma}.\]

The rest of the proof follows from Lemma 3\cite{LL2}.
Fix $x_0\in\Omega$, $y\in\mathbb{R}^d$ and let $2r=d(x_0,\Omega^c)=|x_0-x_1|$ for some $x_1\in\partial\Omega$. If $|x_0-y|\geq r/2$, we have 
\[|y-x_1|\leq |y-x_0|+|x_0-x_1|\leq 5|x_0-y|,\]
and thus
\begin{align*}
    |u(y)-u(x_0)|&\leq |u(y)-u(x_1)|+|u(x_1)-u(x_0)| \\
    &\leq C_2\left( |y-x_1|^{\gamma}+|x_1-x_0|^{\gamma}\right)\\
    &\leq C |y-x_0|^{\gamma}.
\end{align*}

Now consider the case that $|x_0-y|\leq r/2$. Without loss of generality, we assume $0=x_0\in\Omega$ and $\|u\|_{L^\infty(\mathbb R^d)}=1/2$. Define $\rho(x):=C_2|x|^
\gamma$, \begin{equation}
    v(x):=u(rx)-u(x_1) \text{ and }\bar{v}(x):=\min\left\{\rho(4r),\max\left\{v(x),-\rho(4r)\right\}\right\}.
\end{equation} 
Then we have $|x_1|=2r$, $v(x_1/r)=0$ and \begin{equation}\label{est v b2}v(x)=\bar{v}(x) \quad\text{ for all $x\in B_2\subset B_{4}(x_1/r)
$. }\end{equation} 
Notice that, for any $x\in \mathbb{R}^d$, we have \[|v(x)|=|u(rx)-u(x_1)|\leq \rho(|rx-x_1|)\leq\rho(2r+r|x|)\]
and $\|v\|_{L^\infty(\mathbb R^d)}\leq 1$.
And thus we have
\begin{equation}\label{bar v}
    |v(x)-\bar{v}(x)|\leq \min\{(\rho(2r+r|x|)-\rho(4r))_+,\,2\} \text{ in }\mathbb{R}^d.
\end{equation}

Using \eqref{est v b2} and \eqref{bar v}, we have for any $x\in B_1$ 
\begin{align*}
   \left| \tilde{I}^r_\alpha(0)[x, v-\bar{v}]\right|&\leq \int_{|x+z|\geq 2}\min\{(\rho(2r+r|x+z|)-\rho(4r))_+,\,2\}K^r(z)dz\\
    &\leq \int_{|z|\geq 1}\min\{(\rho(3r+r|z|)-\rho(4r))_+,\,2\}K^r(z)dz\\
     &\leq C\int_{|z|\geq 1}\min\{|rz|^{\gamma},\,1\}K^r(z)dz\\
      &= Cr^2\int_{|z|\geq r}\min\{|z|^{\gamma},\,1\}K(z)dz\\
       &\leq Cr^{{\gamma}}\int_{|z|\geq r}\min\{|z|^{2},\,1\}K(z)dz\leq Cr^{\gamma}.
\end{align*}
Thus we proved in $B_1$,
\begin{align}\label{eqL:Ir1}&|\sup_{\alpha\in\mathcal{A}}\left\{-\tr{ \, a_{\alpha}(rx) D^2 \bar{v}(x)}-\tilde{I}^r_{\alpha}(0)[x,\bar{v}]+r\,b_{\alpha}(rx)\cdot D\bar{v}(x)\right.\nonumber\\ &\quad\quad\left.+r^2\,c_{\alpha}(rx)(\bar{v}(x)+u(x_1))+r^2\,f_{\alpha}(rx)\right\}|\leq Cr^{\gamma}. \end{align} 
Using Lemma \ref{thm:holu1111}, we have for $x\in B_{1/2}$
\[|\bar{v}(x)-\bar{v}(0)|\leq C(Cr^{\gamma} +\rho(4r))|x|^{\beta}=Cr^{\gamma}|x|^{\beta}\]
and thus for any $y\in B_{r/2}$
\[|u(y)-u(0)|\leq C r^{\gamma} (\frac{|y|}{r})^{\beta}.\]

If $\beta>{\gamma}$, using $y\in B_{r/2}$ we have  \[ C r^{\gamma} (\frac{|y|}{r})^{\beta}\leq  C r^{\gamma} (\frac{|y|}{r})^{{\gamma}}\leq C|y|^{\gamma}.\] 
If $\beta\leq {\gamma}$,
$ C r^{\gamma} (\frac{|y|}{r})^{\beta}\leq C|y|^{\beta}.$

Finally, we let $\alpha=\min\{\beta,\gamma\}$ and finish the proof.
\end{proof}

\begin{theorem}\label{thm:c2alpha}
Suppose $c_\alpha\geq 0$, (A')(B)(C') hold and $\Omega$ satisfies the uniform exterior ball condition. 
Then there exists $u$ solves \eqref{convex operator} classically with a bounded function $g\in C^\kappa(\Omega^c)$ for some $\kappa>0$.
\end{theorem}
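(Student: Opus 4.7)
The plan is to construct $u$ as a limit of classical solutions to the truncated problem \eqref{modified operator}, using the a priori estimate from Theorem \ref{a priori estimate} to control derivatives uniformly in the truncation parameter $q$.

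For each $q\in\mathbb N$, the truncated kernel $N_\alpha(x,z)\chi_{|z|\geq 1/q}$ still satisfies the pointwise bound by $K$ and the hypotheses (A'), (B), (C') with constants controlled uniformly in $q$ by those of the original kernel. Perron's method from \cite{mou2018} then produces a viscosity solution $u_q\in C(\mathbb R^d)$ of \eqref{modified operator}, uniformly bounded in $q$ via barriers built from $g$ and a standard comparison argument; Lemma \ref{lem:bdry estimate} applied to $u_q$, whose proof invokes only the structural bounds on the kernel that survive the truncation, yields $\|u_q\|_{C^\alpha(\mathbb R^d)}\leq C$ with $C$ independent of $q$.

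I next claim each $u_q$ is a classical solution. Splitting the truncated nonlocal operator as
\begin{align*}
\tilde I_{\alpha,q}[x,u_q] &= \int_{|z|\geq 1/q} u_q(x+z)\, N_\alpha(x,z)\,dz - u_q(x)\int_{|z|\geq 1/q} N_\alpha(x,z)\,dz\\
&\quad - Du_q(x)\cdot \int_{1/q\leq |z|\leq 1} z\,N_\alpha(x,z)\,dz,
\end{align*}
and using Corollary \ref{col:C1alpha} to get $u_q\in C^{1,\alpha}_{\rm loc}(\Omega)$, equation \eqref{modified operator} becomes a second-order convex uniformly elliptic HJB equation with $C^\alpha_{\rm loc}(\Omega)$ coefficients and source (the $q$-dependent integral factors are bounded, uniformly $C^\alpha$ functions of $x$ once $u_q$ is a fixed $C^\alpha$ function). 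Classical Evans--Krylov--Safonov theory then gives $u_q\in C^{2,\alpha}_{\rm loc}(\Omega)$. Since the truncated operator fits into the framework of Theorem \ref{a priori estimate} with constants depending only on $\lambda,\Lambda,K,d$ and the Hölder norms of $a_\alpha,b_\alpha,c_\alpha$, that theorem yields, for every $\delta>0$,
\[\|u_q\|_{C^{2,\alpha}(\Omega_\delta)}\leq C(\delta)\bigl(\|u_q\|_{C^\alpha(\mathbb R^d)}+\sup_\alpha\|f_\alpha\|_{C^\alpha(\Omega)}\bigr),\]
with $C(\delta)$ independent of $q$.

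By Arzel\`a--Ascoli and a diagonal argument a subsequence $u_{q_j}$ converges in $C^2_{\rm loc}(\Omega)$ and uniformly on $\mathbb R^d$ to some $u\in C^{2,\alpha}_{\rm loc}(\Omega)\cap C(\mathbb R^d)$ satisfying $u=g$ on $\Omega^c$. The discrepancy
\[\tilde I_\alpha[x,u_{q_j}] - \tilde I_{\alpha,q_j}[x,u_{q_j}] = \int_{|z|<1/q_j}\bigl(u_{q_j}(x+z)-u_{q_j}(x)-Du_{q_j}(x)\cdot z\bigr)N_\alpha(x,z)\,dz\]
has absolute value bounded by $\|D^2 u_{q_j}\|_{L^\infty(B_{1/q_j}(x))}\int_{B_{1/q_j}}|z|^2 K(z)\,dz\to 0$ by the uniform $C^2_{\rm loc}$ bound and \eqref{integ K}, and the remaining terms converge by $C^2_{\rm loc}$ and uniform convergence together with dominated convergence controlled by $K$. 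Hence $\tilde{\mathcal I}[x,u]=0$ classically in $\Omega$. The hard part is verifying that Theorem \ref{a priori estimate} applies with $q$-uniform constants to the truncated operators: one must check that the pointwise bound by $K$ and the $C^\alpha$ structural estimate in (C') are preserved by the cut-off, which works because the omitted small-$|z|$ piece contributes to the $I_1$-type term in the proof of Theorem \ref{a priori estimate} that is absorbed by the smallness factor $c_\epsilon$.
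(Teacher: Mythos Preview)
Your proposal is correct and follows essentially the same approach as the paper: Perron's method for the truncated problems, bootstrap to classical regularity by treating the truncated nonlocal term as a lower-order operator, then uniform $C^{2,\alpha}$ bounds via Theorem \ref{a priori estimate} and Lemma \ref{lem:bdry estimate}, and a diagonal extraction to pass to the limit. You supply more detail than the paper does, in particular the explicit splitting of $\tilde I_{\alpha,q}$ and the observation that the $q$-dependent small-$|z|$ piece falls into the $I_1$ term of the proof of Theorem \ref{a priori estimate}, where only the pointwise bound by $K$ is used; this is exactly what makes the a priori constants $q$-independent.
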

\begin{proof}
Using Theorem 5.7\cite{mou2018}, we obtain that \eqref{modified operator} admits a viscosity solution $u_q\in C(\mathbb R^d)$. Without loss of generality, we can assume that $u_q$ solves \eqref{modified operator} classically, see Theorem 5.3\cite{mou2017}. Then, using Theorem \ref{a priori estimate} and Lemma \ref{lem:bdry estimate}, we have there exists an $\alpha>0$ such that
\begin{equation*}
 \|u_q\|_{C^{2,\alpha}(\Omega_\delta))}+\|u_q\|_{C^\alpha(\mathbb R^d)}\leq C\left(\|g\|_{C^{\kappa}(\Omega^c)}+\sup_{\alpha\in\mathcal{A}}\|f_{\alpha}\|_{C^\alpha(\Omega)}\right)
\end{equation*}
where $C$ is independent of $q$. Using a diagonal argument, there exist a subsequence of $\{u_q\}_q$ and its limit $u\in C
_{\rm loc}^{2,\alpha}(\Omega)\cap C^\alpha(\mathbb R^d)$ such that $u$ solves \eqref{convex operator} classically.

\end{proof}

\begin{theorem}
Suppose the conditions in Theorem~\ref{thm:c2alpha} hold. Then \eqref{convex operator} admits a unique viscosity solution $u$, which is $C_{\rm loc}^{2,\alpha}(\Omega)\cap C^\alpha(\mathbb R^d)$ for some $\alpha>0$.
\end{theorem}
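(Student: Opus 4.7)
Existence follows immediately from Theorem~\ref{thm:c2alpha}, which constructs a classical solution $u\in C^{2,\alpha}_{\rm loc}(\Omega)\cap C^\alpha(\mathbb{R}^d)$; this $u$ is a fortiori a viscosity solution. The plan for uniqueness is to prove a comparison principle between $u$ and an arbitrary viscosity solution $\tilde u$ of \eqref{convex operator}, exploiting the smoothness of $u$ to use it directly as a test function in the viscosity inequality for $\tilde u$, thereby sidestepping the usual doubling-of-variables machinery needed for general nonlocal comparison.

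I will first argue $\tilde u\le u$; the reverse follows by the symmetric argument on the supersolution side. Since $\tilde u=u=g$ on $\Omega^c$ and both functions are continuous on $\mathbb{R}^d$, any positive maximum $M:=\max_{\overline{\Omega}}(\tilde u-u)>0$ is attained at some interior point $x_0\in\Omega$, and $\varphi:=u+M\in C^2(\Omega)\cap C(\mathbb{R}^d)\cap L^\infty(\mathbb{R}^d)$ touches $\tilde u$ from above at $x_0$ globally. Since a constant shift leaves $D\varphi$, $D^2\varphi$, and $\tilde I_\alpha[\cdot,\varphi]$ equal to those of $u$, applying the viscosity subsolution inequality at $x_0$ with test $\varphi$ and combining with the pointwise classical equation $\tilde{\mathcal I}[x_0,u]=0$ yields
\[
\sup_{\alpha\in\mathcal{A}}\bigl\{G_\alpha(x_0)+c_\alpha(x_0)(u(x_0)+M)\bigr\}\le 0=\sup_{\alpha\in\mathcal{A}}\bigl\{G_\alpha(x_0)+c_\alpha(x_0)u(x_0)\bigr\},
\]
where $G_\alpha(x):=-\tr(a_\alpha D^2u)-\tilde I_\alpha[\cdot,u]+b_\alpha\cdot Du+f_\alpha$ at $x$. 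Under $c_\alpha\ge 0$, these two relations are consistent only if every near-maximizing sequence $\alpha_n$ of the right-hand sup satisfies $c_{\alpha_n}(x_0)\to 0$, which does not yet contradict $M>0$.

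To close the argument under the merely non-strict condition $c_\alpha\ge 0$, I perturb $u$ into a strict classical supersolution by setting $u_\epsilon:=u+\epsilon\eta$, where $\eta\in C^\infty(\mathbb{R}^d)\cap L^\infty(\mathbb{R}^d)$ is a positive bounded function chosen so that, uniformly in $\alpha\in\mathcal{A}$ and $x\in\overline{\Omega}$,
\[
L_\alpha\eta(x):=-\tr(a_\alpha D^2\eta)-\tilde I_\alpha[x,\eta]+b_\alpha\cdot D\eta+c_\alpha\eta\ge \delta_0>0.
\]
A suitable candidate is a smoothly truncated strictly concave quadratic $\zeta(x)(A-|x-x_*|^2)$ with $x_*\notin\overline{\Omega}$ and $\zeta\equiv 1$ on a neighborhood of $\overline{\Omega}$: uniform ellipticity gives $-\tr(a_\alpha D^2\eta)\ge 2d\lambda$ on $\overline{\Omega}$, the concavity makes the small-$|z|$ contribution to $-\tilde I_\alpha[\cdot,\eta]$ nonnegative, and the large-$|z|$ part is controlled through boundedness of $\eta$ and \eqref{integ K}. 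By convexity of $\tilde{\mathcal I}$ in $(u,Du,D^2u,\tilde I u)$,
\[
\tilde{\mathcal I}[x,u_\epsilon]\ge \tilde{\mathcal I}[x,u]+\epsilon\inf_{\alpha\in\mathcal{A}}L_\alpha\eta(x)\ge \epsilon\delta_0>0,
\]
so $u_\epsilon$ is a strict classical supersolution; moreover $\tilde u-u_\epsilon=-\epsilon\eta\le 0$ on $\Omega^c$. Repeating the preceding paragraph's argument with $u_\epsilon$ in place of $u$ now produces the strict inequality $\epsilon\delta_0\le 0$, a contradiction, so $\tilde u\le u_\epsilon$ on $\overline{\Omega}$ for every $\epsilon>0$; sending $\epsilon\to 0$ gives $\tilde u\le u$.

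The main obstacle is the construction of $\eta$ with the required uniform lower bound on $L_\alpha\eta$: the nonlocal term $\tilde I_\alpha[\cdot,\eta]$ must not destroy the positivity supplied by uniform ellipticity, and this step relies crucially on \eqref{integ K} together with the boundedness of $\eta$. Once the perturbation is in hand, the remainder is a direct and elementary viscosity argument using $u$ as a smooth test function, and applying the symmetric version on the supersolution side of $\tilde u$ (with $u-\epsilon\eta$) gives the matching inequality $\tilde u\ge u$, thereby completing the uniqueness proof.
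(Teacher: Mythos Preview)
Your approach is essentially the same as the paper's: exploit the $C^2$ regularity of the classical solution $u$ from Theorem~\ref{thm:c2alpha} to use it directly as a test function against an arbitrary viscosity solution, and handle the degenerate case $c_\alpha\ge 0$ by perturbing $u$ with a barrier that turns it into a strict super/subsolution. The paper carries this out by testing the supersolution side of $v$ with $u-\epsilon\psi$, where $\psi$ is the barrier supplied by Lemma~5.5 of \cite{mou2018} satisfying $1\le\psi\le 2$ and $\mathcal P^+(D^2\psi)+\mathcal P^+_K(\psi)+C_3|D\psi|\le -1$ in $\Omega$; this gives $\tilde{\mathcal I}[x,u-\epsilon\psi]\le -\epsilon$, contradicting the viscosity inequality at the interior touching point.

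The one place your proposal is looser than the paper is the construction of $\eta$. For your truncated quadratic $\zeta(x)(A-|x-x_*|^2)$, the gain $-\tr(a_\alpha D^2\eta)\ge 2d\lambda$ is fixed, while the competing terms---the tail $\int_{B_1^c}(\eta(x+z)-\eta(x))N_\alpha\,dz$ bounded by $2\|\eta\|_\infty\int_{B_1^c}K$, and the drift $|b_\alpha\cdot D\eta|$---scale with $\|\eta\|_\infty$ and $\|D\eta\|_{L^\infty(\overline\Omega)}$, which are not small once $A$ is taken large enough to keep $\eta$ positive. So the inequality $L_\alpha\eta\ge\delta_0>0$ does not follow from this ansatz without further work. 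This is precisely what the cited lemma supplies; in fact the paper's $\psi$ serves directly as your $\eta$ with $\delta_0=1$, since $L_\alpha\psi\ge -\mathcal P^+(D^2\psi)-\mathcal P^+_K(\psi)-C_3|D\psi|\ge 1$. Apart from this point, your argument is correct and matches the paper's.
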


\begin{proof}
According to Theorem~\ref{thm:c2alpha}, \eqref{convex operator} admits a classical solution $u$.
Suppose there is another viscosity solution $v$($\not\equiv u$) of \eqref{convex operator}. Then $\max_{\mathbb R^d}|u-v|$ is obtained inside $\Omega$, i.e. there exists $x_0\in\Omega$ such that
\[|u-v|(x_0)=\max_{\mathbb R^d}|u-v|=\sigma>0.\]
We first assume that $u(x_0)>v(x_0)$.
By $u\in C_{\rm loc}^{2,\alpha}(\Omega)$, we can view $u$ as a test function and thus\[\sup_{\alpha\in\mathcal{A}}\{-\tr{ \, a_{\alpha}(x_0) D^2 u(x_0)}-\tilde{I}_{\alpha}[x_0,u]+b_{\alpha}(x_0)\cdot Du(x_0)+c_{\alpha}(x_0)v(x_0)+f_{\alpha}(x_0)\}\geq 0.\]
Then
\[-\sigma \inf_{\alpha\in\mathcal{A}} c_\alpha (x_0)=\tilde{\mathcal{I}}[x_0,u]-\sigma \inf_{\alpha\in\mathcal{A}} c_\alpha (x_0)\geq  \sup_{\alpha\in\mathcal{A}}\{-\tr{ \, a_{\alpha} D^2 u}-\tilde{I}_{\alpha}[\cdot,u]+b_{\alpha}\cdot Du+c_{\alpha}v+f_{\alpha}\}(x_0)\geq 0.\]
We get a contradiction if $\inf_\alpha c_\alpha(x_0)>0$, otherwise we do a  perturbation argument as follows. Take
\[C_3=\sup_{\alpha\in\mathcal{A},x\in\Omega}|b_\alpha(x)|.\]Recall Lemma 5.5\cite{mou2018}, there exists a function $\psi\in C^2(\Omega)\cap C(\mathbb{R}^d)$ such that
\begin{align*}
&    1\leq \psi\leq 2,\\
&    \mathcal{P}^+(D^2\psi)+\mathcal{P}^+_K(\psi)+C_3 |D\psi|\leq -1 \text{ in }\Omega.
\end{align*}

Now instead of considering $u$ as a test function, we choose $u-\epsilon\psi$ where $\epsilon<\sigma/4$. Then
\[u-\epsilon \psi\leq u-\epsilon= v-\epsilon \text{ in }\Omega^c \]
and
\[(u-\epsilon \psi)(x_0)\geq v(x_0)+\sigma-2\epsilon>v(x_0)+2\epsilon.\]
So we can assume $v-(u-\epsilon\psi)$ obtains its minimum at some $x_0'\in \Omega$ and thus
\[\tilde{\mathcal{I}}[x_0,u-\epsilon\psi]\geq 0\]
which is equivalent to
\begin{align*}
    \tilde{\mathcal{I}}[x_0,u-\epsilon\psi]&=\sup_{\alpha\in\mathcal{A}}\{-\tr{ \, a_{\alpha} D^2 (u-\epsilon\psi)}-\tilde{I}_{\alpha}[\cdot,(u-\epsilon\psi)]+b_{\alpha}\cdot D(u-\epsilon\psi)+c_{\alpha}(u-\epsilon\psi)+f_{\alpha}\}(x_0)\\
    &\leq \tilde{\mathcal{I}}[x_0,u]+\epsilon\sup_{\alpha\in\mathcal{A}}\{ \tr\,a_\alpha D^2\psi+\tilde{I}_\alpha[\cdot,\psi]+|b_\alpha||D\psi|- c_\alpha \psi\}(x_0)\\
    &\leq -\epsilon<0,
\end{align*}  
which leads to a contradiction. So $u(x_0)\leq v(x_0)$.
A similar argument shows that $u(x_0)\geq v(x_0)$.
In all we finish the proof.
\end{proof}

\end{document}